\newcommand{\N}{{\mathbb{N}}}  
\newcommand{\Z}{{\mathbb{Z}}}  
\newcommand{\R}{{\mathbb{R}}}  
\newcommand{\Czi}{{C^{\infty}_{0}}}  
\newcommand{\Sw}{\mathcal{S}}	
\newcommand{\db}{\normalfont{\text{\dbar}}}
\newcommand{\rmd}{\mathrm{d}}
\newcommand{\I}{\mathrm{i}}
\newcommand{\jbl}{\langle}
\newcommand{\jbr}{\rangle}
\newcommand{\vp}{\varphi}
\newcommand{\ve}{\varepsilon}
\newcommand{\Osi}{\mathrm{Os-}\int\limits_{\R^n}}
\newcommand{\Osii}{\mathrm{Os-}\iint\limits_{\R^{2n}}}
\newcommand{\Sy}{S}
\newcommand{\OPS}{\Psi}
\newcommand{\OP}{Op}
\newcommand{\jxi}{\jbl\xi\jbr}
\newcommand{\B}{B}
\newcommand{\W}{\mathcal W}
\renewcommand{\Re}{\operatorname{Re}}
\DeclareMathOperator*{\supp}{supp}
\newtheorem{Theorem}{Theorem}[section]
\newtheorem{Proposition}[Theorem]{Proposition}
\theoremstyle{definition}
\newtheorem{Definition}[Theorem]{Definition}
\theoremstyle{remark}
\newtheorem{Remark}[Theorem]{Remark}
\newtheorem*{Example}{Example}
\numberwithin{equation}{section}
\newcommand*\xbar[1]{%
	\hbox{%
		\vbox{%
			\hrule height 0.5pt 
			\kern0.5ex
			\hbox{%
				\kern-0.1em
				\ensuremath{#1}%
				\kern-0.1em
			}%
		}%
	}%
}
\begin{document}
	

	\title[Strictly Hyperbolic Equations with Coefficients Low-Regular in Time]{Strictly Hyperbolic Equations with Coefficients Low-Regular in Time and Smooth in Space}
	\author[Cicognani]{Massimo Cicognani}
	\address{%
	Universita di Bologna, Dipartimento di Matematica\\Piazza di Porta San Donato, 5,\\40126 Bologna, Italy	
	}
	\email{massimo.cicognani@unibo.it}
	
	\author[Lorenz]{Daniel Lorenz}
	\address{%
		TU Bergakademie Freiberg, Faculty of Mathematics and Computer Science\\Institute of Applied Analysis\\Pr{\"u}ferstraße 9,\\09599 Freiberg, Germany
	}
	\email{daniel.lorenz@math.tu-freiberg.de}
	
	\subjclass{35S05, 35L30, 47G30}
	\keywords{higher order strictly hyperbolic Cauchy problem, modulus of continuity, loss of derivatives, pseudodifferential operators}
	
	\begin{abstract}
		We consider the Cauchy problem for strictly hyperbolic $m$-th order partial differential equations with coefficients low-regular in time and smooth in space. It is well-known that the problem is $L^2$ well-posed in the case of Lipschitz continuous coefficients in time, $H^s$ well-posed in the case of Log-Lipschitz continuous coefficients in time (with an, in general, finite loss of derivatives) and Gevrey well-posed in the case of H{\"o}lder continuous coefficients in time (with an, in general, infinite loss of derivatives). Here, we use moduli of continuity to describe the regularity of the coefficients with respect to time, weight sequences for the characterization of their regularity with respect to space and weight functions to define the solution spaces. We establish sufficient conditions for the well-posedness of the Cauchy problem, that link the modulus of continuity and the weight sequence of the coefficients to the weight function of the solution space. The well-known results for Lipschitz, Log-Lipschitz and H{\"o}lder coefficients are recovered.
	\end{abstract}

	\maketitle

	\section{Introduction}\label{INTRO}
	
	We consider the strictly hyperbolic Cauchy problem
	\begin{equation}\label{SH:BM:CauchyProblem}
	\begin{aligned}
	D_t^m u &= \sum\limits_{j=0}^{m-1} A_{m-j}(t,\,x,\,D_x) D_t^j u  + f(t,x),\\
	D_t^{k-1} u(0,x) &= g_k(x), \quad (t,x) \in [0, T]\times \R^n,\,k = 1,\,\ldots,\,m,
	\end{aligned}
	\end{equation}
	where
	\begin{equation*}
		A_{m-j}(t,\,x,\,D_x) = \sum\limits_{|\gamma| + j = m} a_{m-j, \gamma}(t,\,x) D_x^\gamma + \sum\limits_{|\gamma| + j \leq m-1} a_{m-j,\gamma}(t,\,x) D_x^\gamma,
	\end{equation*}
	and $D_t = \frac{1}{\I} \partial_t$, $D_x^\gamma = (\frac{1}{\I} \partial_x)^\gamma$, as usual.
	We are interested in well-posedness results for the above Cauchy problem, when the regularity of the coefficients with respect to time is Lipschitz or below Lipschitz. We obtain a sufficient result for well-posedness which links the regularity of the coefficients in time to their regularity in space and the possible solution spaces.
	
	It is well-known that the strictly hyperbolic Cauchy problem is not well-posed in $C^\infty, H^\infty$, respectively, if the regularity of the coefficients in time is lower than Lipschitz. Usually, one has to compensate for the low regularity in time by assuming higher regularity in space. Colombini et al.~\cite{Colombini.1979} proved this fact for second-order equations with H{\"o}lder continuous coefficients which only depend on $t$.
	Considering operators whose coefficients are H{\"o}lder continuous in time and Gevrey in the spatial variables, Nishitani~\cite{Nishitani.1983} and Jannelli~\cite{Jannelli.1985} were able to extend the results of \cite{Colombini.1979}.
	
	Colombini and Lerner~\cite{Colombini.1995} considered second-order operators with coefficients which are Log-Lipschitz in time and $C^\infty$ in space (with some additional $L^\infty$ conditions). They proved well-posedness in Sobolev-spaces (with finite loss of derivatives) and established the Log-Lipschitz regularity as the natural threshold beyond which no Sobolev well-posedness could be expected.
	
	Cicognani~\cite{Cicognani.1999} extended the results of \cite{Colombini.1995, Jannelli.1985, Nishitani.1983} to equations of order $m$ and considered Log-Lipschitz and H{\"o}lder continuous coefficients, which also depend on $x$.
	
	For second order equations Cicognani and Colombini~\cite{Cicognani.2006} provided a classification, linking the loss of derivatives to the modulus of continuity of the coefficients with respect to time.
	
	In this paper, we consider the strictly hyperbolic Cauchy problem \eqref{SH:BM:CauchyProblem}, where we assume that the coefficients $a_{m-j,\,\gamma}$ satisfy
	\begin{equation*}
		\big|D_x^\beta a_{m-j,\,\gamma}(t,\,x) - D_x^\beta a_{m-j,\,\gamma}(s,\,x)\big| \leq C K_{|\beta|} \mu(|t-s|),\, 0 \leq |t-s| \leq 1, \,x\in\R^n,
	\end{equation*}
	where $\mu$ is a modulus of continuity describing their regularity in time and $K_p$ is a weight sequence describing their regularity in space. To describe the regularity of the initial data, the right hand side and the solution, we use the spaces
	\begin{equation*}
		H^\nu_{\eta,\,\delta}(\R^n) = \big\{f \in \Sw^\prime(\R^n);\, e^{\delta\eta(\jbl D_x \jbr)} f(x) \in H^\nu(\R^n)\big\},
	\end{equation*}
	where $H^\nu(\R^n) = H^{\nu,\,2}(\R^n)$ denotes the usual Sobolev-spaces and $\delta > 0$ is a constant.
	Under suitable conditions we are able to prove that our problem is well-posed and that we have a global (in time) solution which belongs to the space
	\begin{equation*}
		\bigcap\limits_{j = 0}^{m-1} C^{m-1-j}\big([0,T];\, H^{\nu+j}_{\eta,\,\delta}(\R^n)\big).
	\end{equation*}
	As to be expected from the know results of the above-mentioned authors, the modulus of continuity $\mu$ is linked to the weight function $\eta$. In this paper, we describe how $\mu$ and $\eta$ are related to 
	each other and give sufficient conditions for the well-posedness of problem \eqref{SH:BM:CauchyProblem} which link $\mu$ to $\eta$ and the sequence $K_p$.
	
	The paper is organized as follows: Section~\ref{DEF} reviews some definitions and useful propositions related to moduli of continuity. It also provides an introduction to the pseudodifferential calculus used in this paper. Section~\ref{RESULTS} states the main results of this paper and also features some examples and remarks. Finally, in Section~\ref{PROOF} we proceed to prove the theorems of the previous section.

	\section{Definitions and Useful Propositions}\label{DEF}
	
	Let $x = (x_1,\,\,\ldots,\,x_n)$ be the variables in the $n$-dimensional Euclidean space $\R^n$ and by $\xi = (\xi_1,\,\ldots,\,\xi_n)$ we denote the dual variables. Furthermore, we set $\jbl\xi\jbr^2 = 1 + |\xi|^2$.
	We use the standard multi-index notation. Precisely, let $\Z$ be the set of all integers and $\Z_+$ the set of all non-negative integers. Then $\Z^n_+$ is the set of all $n$-tuples $\alpha = (\alpha_1,\,\ldots,\,\alpha_n)$ with $a_k \in \Z_+$ for each $k = 1,\,\ldots,\,n$. The length of $\alpha \in \Z^n_+$ is given by $|\alpha| = \alpha_1 + \ldots + \alpha_n$.\\
	Let $u = u(t,\,x)$ be a differentiable function, we then write
	\begin{equation*}
		u_t(t,\,\xi) = \partial_t u (t,\,x) = \frac{\partial}{\partial t} u(t,\,x),
	\end{equation*}
	and
	\begin{equation*}
		\partial_x^\alpha u (t,\,x) = \left(\frac{\partial}{\partial x_1}\right)^{\alpha_1} \ldots\left(\frac{\partial}{\partial x_n}\right)^{\alpha_n} u(t,\,x).
	\end{equation*}
	Using the notation $D_{x_j} = -\I \frac{\partial}{\partial x_j}$, where $\I$ is the imaginary unit, we write also
	\begin{equation*}
		D_x^\alpha = D_{x_1}^{\alpha_1} \cdots D_{x_n}^{\alpha_n}.
	\end{equation*}
	Similarly, for $x\in \R^n$ we set
	\begin{equation*}
		x^\alpha = x_1^{\alpha_1} \cdots x_n^{\alpha_n}.
	\end{equation*}
	In the context of pseudodifferential operators and the related symbol calculus, we sometimes use the notation
	\begin{equation*}
		a^{(\alpha)}_{(\beta)}(x,\,\xi) = \partial_\xi^\alpha D_x^\beta a(x,\,\xi).
	\end{equation*}

	Let $f$ be a continuous function in an open set $\Omega \subset \R^n$. By $\supp f$ we denote the support of $f$, i.e. the closure in $\Omega$ of $\{x \in \Omega,\,f(x) \neq 0\}$. By $C^k(\Omega)$, $0 \leq k \leq \infty$, we denote the set of all functions $f$ defined on $\Omega$, whose derivatives $\partial^\alpha_x f$ exist and are continuous for $|\alpha| \leq k$. By $\Czi(\Omega)$ we denote the set of all functions $f \in C^\infty(\Omega)$ that have compact support in $\Omega$. The Sobolev-space $H^{k,p}(\Omega)$ consists of all functions that are $k$ times differentiable in Sobolev-sense and have (all) derivatives in $L^p(\Omega)$.
	
	For two functions $f=f(x)$ and $g=g(x)$ we write
	\begin{align*}
		f(x) = o(g(x))\qquad \text{ if }\qquad \lim\limits_{x\rightarrow \infty} \frac{f(x)}{g(x)} = 0,
	\end{align*}
	and we use the notation
	\begin{equation*}
		f(x) = O(g(x))\qquad \text{ if }\qquad \limsup\limits_{x\rightarrow \infty} \frac{f(x)}{g(x)} \leq C.
	\end{equation*}
	We use $C$ as a generic positive constant which may be different even in the same line.

	Furthermore, we introduce the following spaces.
	\begin{Definition}
		Let $\eta$ be a real, smooth, increasing function, $\nu \in \R$ and $\delta > 0$. We define the space $H^\nu_{\eta,\,\delta} = H^\nu_{\eta,\,\delta}(\R^n)$ by
		\begin{equation*}
			H^\nu_{\eta,\,\delta}(\R^n) = \big\{f \in \Sw^\prime(\R^n);\, e^{\delta\eta(\jbl D_x \jbr)} f(x) \in H^\nu(\R^n)\big\},
		\end{equation*}
		where $H^\nu(\R^n)=H^{\nu,2}(\R^n)$ denotes the usual Sobolev-spaces.
	\end{Definition}
	\begin{Definition}
		Let $K_p$ be a positive, increasing sequence of real numbers. We define the space $\B_K^\infty = \B_K^\infty(\R^n)$ by
		\begin{equation*}
			\B_K^\infty(\R^n) = \big\{ f \in C^\infty(\R^n);\, \sup_{x\in\R^n} |D_x^\beta f(x)| \leq C K_{|\beta|}\, \text{for all } \beta \in \N^n \big\}.
		\end{equation*}
	
		By $\B^\infty=\B^\infty(\R^n)$ we denote the space of all smooth functions that have bounded derivatives.
	\end{Definition}

	\subsection{Moduli of Continuity}

	As explained in Section~\ref{INTRO}, we use moduli of continuity to describe the regularity of the coefficients with respect to time.
	Let us briefly recall what we understand by the term modulus of continuity.
	\begin{Definition}[Modulus of Continuity and $\mu$-Continuity]\label{SH:BM:RegTime:Definition:MOC}
		We call $\mu: [0,\,1] \rightarrow [0,\,1]$ a modulus of continuity, if $\mu$ is continuous, concave and increasing and satisfies $\mu(0) = 0$.
		A function $f \in C(\R^n)$ belongs to $C^\mu(\R^n)$ if and only if
		\begin{equation*}
			|f(x) - f(y)| \leq C \mu(|x-y|),
		\end{equation*}
		for all $x,\,y \in \R^n,\, |x-y| \leq 1$ and some constant $C$.
	\end{Definition}
	Typical examples of moduli of continuity are presented in the following table.
	\begin{center}
		\begin{tabulary}{\textwidth}{LL}
			\toprule
			modulus of continuity &commonly called\\
			\midrule
			$\mu(s) = s$	& Lipschitz-continuity\\[5pt]
			$\mu(s) = s \left(\log\left(\frac{1}{s}\right) + 1 \right)$	& Log-Lip-continuity\\[4pt]
			$\mu(s) = s \left(\log\left(\frac{1}{s}\right) + 1\right)\log^{[m]}\left(\frac{1}{s}\right)$ 	& Log-Log$^{[m]}$-Lip-continuity\\[5pt]
			$\mu(s) = s^\alpha,\quad \alpha\in(0,\,1)$	& H{\"o}lder-continuity \\[5pt]
			$\mu(s) = \left(\log\left(\frac{1}{s}\right) + 1 \right)^{-\alpha},\quad \alpha\in(0,\,\infty)$	& Log$^{-\alpha}$-continuity\\[5pt]
			\bottomrule
		\end{tabulary}
	\end{center}
	For convenience, we introduce the notion of weak and strong moduli of continuity (compared to the threshold of $\mu(s) = s \log(s^{-1})$).
	\begin{Definition}\label{SH:BM:ASSUME:ClassMOC}
		We call a given modulus of continuity $\mu$ strong, if
		\begin{equation*}
			\lim\limits_{s\rightarrow 0+} \frac{\mu(s)}{s \log(s^{-1})} \leq C,
		\end{equation*}i.e. functions belonging to $C^\mu$ are Log-Lip-continuous or more regular. Consequently, $\mu$ is called a weak modulus of continuity, if
		\begin{equation*}
			\lim\limits_{s\rightarrow 0+} \frac{s \log(s^{-1})}{\mu(s)} = 0,
		\end{equation*}i.e. functions belonging to $C^\mu$ are less regular than Log-Lip.
	\end{Definition}
	
	\subsection{Symbol Classes and Symbolic Calculus}
	
	We introduce the standard symbol classes of pseudodifferential operators following H{\"o}rmander~\cite{Hormander.2007}.
	
		\begin{Definition}[$\Sy^m_{\rho,\,\delta}$ and $\OPS^{m}_{\rho,\,\delta}$]\label{APP:PSEUDO:DEF:SmRD}\label{APP:PSEUDO:DEF:Sm}
			Let $m,\,\rho,\,\delta$ be real numbers with $0 \leq \delta < \rho \leq 1$. Then we denote by $\Sy^m_{\rho,\,\delta} = \Sy^m_{\rho,\,\delta}(\R^n\times\R^n)$ the set of all $a\in C^\infty(\R^n \times \R^n)$ such that for  all multi-indexes $\alpha,\,\beta$ the estimate
			\begin{equation*}
			|D_x^\beta \partial_\xi^\alpha a(x,\,\xi)| \leq C_{\alpha,\,\beta} (1 + |\xi|)^{m - \rho|\alpha| + \delta |\beta|},
			\end{equation*}
			is valid for all $x,\,\xi \in \R^n$ and some constant $C_{\alpha,\,\beta}$.
			We write $\Sy^{-\infty}_{\rho,\,\delta} = \bigcap_m \Sy^m_{\rho,\,\delta}$, $\Sy^\infty_{\rho,\,\delta} = \bigcup_m \Sy^m_{\rho,\,\delta}$.
			For a given $a=a(x,\,\xi) \in \Sy^m_{\rho,\,\delta}$, we denote by $\OP(a) = a(x,\,D_x)$ the associated pseudodifferential operator, which is defined as
			\begin{align*}
			a(x,\,D_x) u(x) = \int\limits_{\R^n} e^{\I x\cdot \xi} a(x,\,\xi)  \hat{u}(\xi) \db \xi
			= \Osii e^{\I (x-y)\cdot \xi} a(x,\,\xi)  u(y) \rmd y \db\xi,
			\end{align*}
			where $\db \xi = (2\pi)^{-n} \rmd \xi$ and $\Osii$ means the oscillatory integral.\\
			By $\OPS^m_{\rho,\,\delta} = \OPS^m_{\rho,\,\delta}(\R^n)$ we denote the set of all pseudodifferential operators that are associated to some symbol in $\Sy^m_{\rho,\,\delta}$.
			Conversely, for $a \in \OPS^{m}_{\rho,\,\delta}$, we denote by $\sigma(a) \in \Sy^{m}_{\rho,\,\delta}$ the associated symbol.
		\end{Definition}
		Analogously, we define sets of weighted symbols and associated operators.
		\begin{Definition}[$\Sy^{m,\,\omega}_{\rho,\,\delta}$ and $\OPS^{m,\,\omega}_{\rho,\,\delta}$]\label{APP:PSEUDO:DEF:SmORD}\label{APP:PSEUDO:DEF:SmO}
			Let $m,\,\rho,\,\delta$ be real numbers with $0 \leq \delta < \rho \leq 1$. Let $\omega$ be a non-decreasing, continuous function satisfying $\omega(s) = o(s)$ for $s \rightarrow +\infty$. Then we denote by $\Sy^{m,\,\omega}_{\rho,\,\delta} = \Sy^{m,\,\omega}_{\rho,\,\delta}(\R^n\ \times\R^n)$ the set of all $a\in C^\infty(\R^n \times \R^n)$ such that for all multi-indexes $\alpha,\,\beta$ the estimate
			\begin{equation*}
			|D_x^\beta \partial_\xi^\alpha a(x,\,\xi)| \leq C_{\alpha,\,\beta} (1 + |\xi|)^{m - \rho|\alpha| + \delta |\beta|} \omega(\jbl \xi \jbr),
			\end{equation*}
			is valid for all $x,\,\xi \in \R^n$ and some constant $C_{\alpha,\,\beta}$.
			The set $\OPS^{m,\,\omega}_{\rho,\,\delta} = \OPS^{m,\,\omega}_{\rho,\,\delta}(\R^n)$ is defined analogously to $\OPS^{m}_{\rho,\,\delta}$.
		\end{Definition}
	
	 As usual, if $(\rho,\,\delta) = (1,\,0)$ we omit them and just write $\Sy^m$,  $\Sy^{m,\,\omega}$, $\OPS^m$, $\OPS^{m,\,\omega}$.
	
	 Since $\OPS^{m,\,\omega} \subset \OPS^{m+1}$, the following composition result is obtained by straightforward computation.
	 \begin{Proposition}[Composition of $\OPS^m$ and $\OPS^{m,\,\omega}$]\label{APP:PSEUDO:CALC:COMP:SmO}
	 	Let $a_1 \in \Sy^{m_1}$ and $a_2 \in \Sy^{m_2,\,\omega}$, then as operators in $\Sw$ or $\Sw^\prime$
	 	\begin{align*}
	 		a_1(x,\,D_x) \circ a_2(x,\,D_x) &= b_1(x,\,D_x),\\
	 		a_2(x,\,D_x) \circ a_1(x,\,D_x) &= b_2(x,\,D_x),
	 	\end{align*}
	 	where $b_1,\,b_2 \in \Sy^{m_1 + m_2,\,\omega}$ have the asymptotic expansions
	 	\begin{align*}
	 		b_1(x,\,\xi) &\sim \sum\limits_\alpha \frac{1}{\alpha!}\partial_\xi^\alpha a_1(x,\,\xi) D_x^\alpha a_2(x,\,\xi),\\
	 		b_2(x,\,\xi) &\sim \sum\limits_\alpha \frac{1}{\alpha!}\partial_\xi^\alpha a_2(x,\,\xi) D_x^\alpha a_1(x,\,\xi).
	 	\end{align*}
	 \end{Proposition}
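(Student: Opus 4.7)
The plan is to reduce the statement to the standard composition calculus for $\OPS^m$ and then track how the weight $\omega$ is inherited from the $S^{m_2,\,\omega}$ factor. Since $a_2\in\Sy^{m_2,\,\omega}\subset\Sy^{m_2+1}$, the H\"ormander composition theorem already guarantees that $a_1(x,D_x)\circ a_2(x,D_x)$ and $a_2(x,D_x)\circ a_1(x,D_x)$ are pseudodifferential operators in $\OPS^{m_1+m_2+1}$, with symbols $b_1$, $b_2$ given by the usual oscillatory integrals
\begin{equation*}
b_1(x,\xi)=\Osii e^{-\I y\cdot\eta}\,a_1(x,\xi+\eta)\,a_2(x+y,\xi)\,\rmd y\,\db\eta,
\end{equation*}
and analogously for $b_2$. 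The task is therefore only to sharpen the class from $\Sy^{m_1+m_2+1}$ to $\Sy^{m_1+m_2,\,\omega}$.

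To do this I would perform a Taylor expansion of $a_1(x,\xi+\eta)$ in $\eta$ around $\eta=0$ up to order $N$. The monomial $\eta^\alpha$ becomes $D_y^\alpha$ after integration by parts in the oscillatory integral, and hits $a_2(x+y,\xi)$, producing exactly the term $\tfrac{1}{\alpha!}\partial_\xi^\alpha a_1(x,\xi)\,D_x^\alpha a_2(x,\xi)$ in the expansion claimed for $b_1$; the argument for $b_2$ is symmetric. Each such term belongs to $\Sy^{m_1+m_2-|\alpha|,\,\omega}$ because differentiating $a_1$ costs $|\alpha|$ powers of $\jxi$, differentiating $a_2$ is free (since $\delta=0$), and the $\omega(\jxi)$ factor is carried along from $a_2$. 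Hence the partial sums lie in $\Sy^{m_1+m_2,\,\omega}$.

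The remainder after cutting the expansion at order $N$ is a Taylor integral
\begin{equation*}
R_N(x,\xi)=\sum_{|\alpha|=N}\frac{N}{\alpha!}\int_0^1(1-\theta)^{N-1}\,\Osii e^{-\I y\cdot\eta}\,\partial_\xi^\alpha a_1(x,\xi+\theta\eta)\,D_y^\alpha a_2(x+y,\xi)\,\rmd y\,\db\eta\,\rmd\theta,
\end{equation*}
which I would estimate by the standard oscillatory-integral technique: insert cut-offs $\chi(\eta/\jxi)+(1-\chi(\eta/\jxi))$ separating $|\eta|\lesssim\jxi$ from $|\eta|\gtrsim\jxi$. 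On the low-frequency piece, $\jbl\xi+\theta\eta\jbr\sim\jxi$, so $\partial_\xi^\alpha a_1$ contributes $\jxi^{m_1-N}$ and the integration in $y$ against $D_y^\alpha a_2$ contributes $\jxi^{m_2}\omega(\jxi)$ (together with the $\eta$-integration, handled by integration by parts in $y$ to gain factors $\jbl\eta\jbr^{-M}$). On the high-frequency piece, repeated integration by parts in $\eta$ using $e^{-\I y\cdot\eta}=\jbl y\jbr^{-2M}(1-\Delta_\eta)^M e^{-\I y\cdot\eta}$ and then in $y$ produces arbitrary decay, while $a_2$ still carries its $\omega(\jxi)$. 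Choosing $N$ large enough then yields $R_N\in\Sy^{m_1+m_2,\,\omega}$, and differentiating $R_N$ in $(x,\xi)$ adds the expected polynomial factors without disturbing the $\omega$-weight.

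The main obstacle is precisely this last point: one has to check that all error terms produced by the Taylor/oscillatory-integral procedure keep the $\omega(\jxi)$ weight inherited from $a_2$ rather than promoting it to an additional power of $\jxi$. Because $a_2$ appears linearly inside every remainder integral and $\omega$ depends only on $\jxi$ (not on the integration variables), this is a bookkeeping matter: pulling $\omega(\jxi)$ out of the innermost estimate before integrating in $y$ and $\eta$ gives the desired bound, and the routine derivative estimates on $b_1$, $b_2$ follow by differentiating under the oscillatory integral.
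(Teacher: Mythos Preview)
Your proposal is correct and matches the paper's approach exactly: the paper does not give a detailed proof but simply remarks, just before the statement, that ``since $\OPS^{m,\,\omega}\subset\OPS^{m+1}$, the following composition result is obtained by straightforward computation.'' What you have written is precisely that straightforward computation spelled out --- reducing to the standard H\"ormander calculus via the inclusion and then observing that the weight $\omega(\jxi)$, depending only on $\xi$, factors out of every term and remainder.
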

	
	Consider a pseudodifferential operator $a \in \OPS^m$ and a non-negative, increasing function $\psi\in C^\infty(\R^n)$. Throughout this paper we refer to the transformation
	\begin{equation*}
		a_{\psi}(x,\,D_x) = e^{\lambda \psi(\jbl D_x \jbr)} \circ a(x,\,D_x) \circ e^{-\lambda \psi(\jbl D_x \jbr)}
	\end{equation*}
	as conjugation, where $\lambda$ is a positive constant.

	\begin{Proposition}[Conjugation in $\OPS^m$]\label{APP:PSEUDO:CALC:CON}
		Let $a \in \OPS^m$ and let $\psi \in C^\infty(\R^n)$ be a non-negative, increasing function satisfying
		\begin{equation}\label{APP:PSEUDO:CALC:CON:EstEta}
		\bigg|\frac{\rmd^k}{\rmd s^k}\psi (s) \bigg| \leq C_k s^{-k} \psi(s),\qquad k \in \N, \, s \in \R.
		\end{equation}
		We fix a constant $\lambda > 0$. Then the symbol $a_{\psi}(x,\,\xi) = \sigma(a_{\psi}(x,\,D_x))$ of
		\begin{equation*}
			a_{\psi}(x,\,D_x) = e^{\lambda \psi(\jbl D_x \jbr)} \circ a(x,\,D_x) \circ e^{-\lambda \psi(\jbl D_x \jbr)}
		\end{equation*}
		satisfies
		\begin{equation}\label{APP:PSEUDO:CALC:CON:eq1}
		a_{\psi}(x,\,\xi) = a(x,\,\xi) + \sum\limits_{0 < |\gamma| < N} a_{(\gamma)}(x,\,\xi) \chi_\gamma(\xi) + r_N(x,\,\xi),
		\end{equation}
		where
		\begin{equation}\label{APP:PSEUDO:CALC:CON:eq2}
		\chi_\gamma(\zeta) = \frac{1}{\gamma!} e^{-\lambda \psi(\jxi)} \partial_\nu^\gamma\big(e^{\lambda \psi(\jbl\nu\jbr)}\big)\Big|_{\nu = \zeta},
		\end{equation}
		and
		\begin{equation}\label{APP:PSEUDO:CALC:CON:eq3}
			\begin{aligned}
				r_N(x,\,\xi) = \frac{N}{(2\pi)^n} \sum\limits_{|\gamma| = N} \bigg[&\Osii \int\limits_0^1 (1-\vartheta)^{N-1} e^{-\I y \zeta}\\
				&\times a_{(\gamma)}(x + \vartheta y,\,\xi) \chi_\gamma(\xi + \zeta) \rmd \vartheta \rmd y \rmd\zeta\bigg].
			\end{aligned}
		\end{equation}
		Furthermore, we have the estimate
		\begin{equation}
		|\partial_\xi^\alpha \chi_\gamma(\xi)| \leq C_{\alpha,\,\gamma} \jxi^{- |\alpha| - |\gamma|} (\psi(\jxi))^{|\gamma|} \label{APP:PSEUDO:CALC:CON:est1}
		\end{equation}
		for $\xi \in \R^n$ and $\alpha \in \N^n$.
	\end{Proposition}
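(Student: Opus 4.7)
The plan is to derive the expansion by a direct symbol-level computation, followed by a Taylor expansion with integral remainder and an integration by parts in the dual variable $\zeta$. I would begin by setting $p(\xi) := e^{\lambda\psi(\jxi)}$, so that $p(D_x)$ and $p(D_x)^{-1}$ act as Fourier multipliers. Composing the oscillatory-integral definition of $a(x,D_x)$ with these multipliers and performing the change of variables $\eta = \xi + \zeta$ on the intermediate Fourier side, one finds
\[
 a_\psi(x,\xi) \;=\; \Osii e^{-\I y\cdot\zeta}\,\frac{p(\xi+\zeta)}{p(\xi)}\,a(x+y,\xi)\,\rmd y\,\rmd\zeta/(2\pi)^n.
\]
This step needs some care because $p,p^{-1}$ lie outside every $\Sy^m_{\rho,\delta}$; acting first on $\Sw(\R^n)$ and extending by density makes all manipulations legitimate.

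Next I would Taylor expand $a(x+y,\xi)$ in $y$ around $y=0$ to order $N-1$ with integral remainder, and substitute into the oscillatory integral. For the polynomial part, the identity $y^\gamma e^{-\I y\cdot\zeta} = \I^{|\gamma|}\partial_\zeta^\gamma e^{-\I y\cdot\zeta}$ allows integration by parts in $\zeta$; the resulting $(y,\zeta)$-integral then collapses by Fourier inversion to $\partial_\zeta^\gamma\{p(\xi+\zeta)/p(\xi)\}\big|_{\zeta=0}$. Since $\partial_\zeta^\gamma p(\xi+\zeta)\big|_{\zeta=0}=\partial_\xi^\gamma p(\xi)=\gamma!\,\chi_\gamma(\xi)\,p(\xi)$ and $(-\I)^{|\gamma|}\partial_x^\gamma=D_x^\gamma$, the term of order $\gamma$ is exactly $a_{(\gamma)}(x,\xi)\,\chi_\gamma(\xi)$, with the $\gamma=0$ contribution giving back $a(x,\xi)$. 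Applying the same integration by parts to the Taylor remainder and using $\partial_\zeta^\gamma\{p(\xi+\zeta)/p(\xi)\}=\gamma!\,\chi_\gamma(\xi+\zeta)$, read directly off \eqref{APP:PSEUDO:CALC:CON:eq2}, reproduces the claimed formula \eqref{APP:PSEUDO:CALC:CON:eq3} for $r_N$.

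For the estimate \eqref{APP:PSEUDO:CALC:CON:est1} I plan to use Faà di Bruno. Writing $\chi_\gamma(\xi)=(\gamma!)^{-1}p(\xi)^{-1}\partial_\xi^\gamma p(\xi)$ as a polynomial in the derivatives $\{\partial_\xi^\beta\psi(\jxi)\}_{1\le|\beta|\le|\gamma|}$, I would first prove by induction on $|\beta|$, combining the hypothesis \eqref{APP:PSEUDO:CALC:CON:EstEta} with $|\partial_\xi^\beta\jxi|\le C_\beta\jxi^{1-|\beta|}$, that
\[
 |\partial_\xi^\beta\psi(\jxi)|\le C_\beta\,\jxi^{-|\beta|}\,\psi(\jxi), \qquad \beta\in\N^n.
\]
A Faà-di-Bruno term corresponding to a partition of $\gamma$ into $k$ blocks is then bounded by a constant times $\jxi^{-|\gamma|}\psi(\jxi)^k$; summing the finitely many such terms and applying $\partial_\xi^\alpha$ in the same way yields $|\partial_\xi^\alpha\chi_\gamma(\xi)|\le C_{\alpha,\gamma}\jxi^{-|\alpha|-|\gamma|}(\psi(\jxi))^{|\gamma|}$, the lower powers of $\psi$ being absorbed into $C_{\alpha,\gamma}$ on the bounded region $\jxi\le 1$.

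The hard part will be analytic rather than algebraic: the oscillatory integrals in \eqref{APP:PSEUDO:CALC:CON:eq1} and \eqref{APP:PSEUDO:CALC:CON:eq3} involve the factor $p(\xi+\zeta)/p(\xi)$, which need not decay in $\zeta$, so the composition result of Proposition~\ref{APP:PSEUDO:CALC:COMP:SmO} does not apply directly. I would circumvent this by performing all Taylor and integration-by-parts steps on $\Sw(\R^n)$ pointwise in $\xi$, and then verifying a posteriori, using the estimate just derived on $\chi_\gamma$, that $r_N$ lies in a weighted symbol class of type $\Sy^{m-N,\psi^N}$ in which the defining oscillatory integral is absolutely convergent once $N$ is chosen large enough.
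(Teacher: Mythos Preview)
Your proposal is correct and follows essentially the same route as the paper. The paper's own proof is extremely terse: it defers the derivation of \eqref{APP:PSEUDO:CALC:CON:eq1}--\eqref{APP:PSEUDO:CALC:CON:eq3} to the Kajitani references \cite{Kajitani.1983,Kajitani.2006} (where the case $\psi(s)=s^\kappa$ is treated) and simply notes that the argument carries over to general $\psi$; for the estimate \eqref{APP:PSEUDO:CALC:CON:est1} it writes $\partial_\nu^\gamma e^{\lambda\psi(\jbl\nu\jbr)}=e^{\lambda\psi(\jxi)}Q_\gamma(\xi)$ with $Q_\gamma(\xi)=C_\gamma\psi'(\jxi)^{|\gamma|}+R_\gamma(\xi)$ and then applies \eqref{APP:PSEUDO:CALC:CON:EstEta}. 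Your oscillatory-integral/Taylor/integration-by-parts outline is exactly the computation those references contain, and your Fa\`a-di-Bruno argument is the explicit version of the paper's $Q_\gamma$ bookkeeping. Two small remarks: (i) your phrase ``on the bounded region $\jxi\le 1$'' is a slip, since $\jxi\ge 1$ always; the absorption of the lower powers $\psi(\jxi)^k$, $k<|\gamma|$, into $\psi(\jxi)^{|\gamma|}$ needs instead $\psi(\jxi)\ge\psi(1)>0$ (or a separate constant on any compact $\xi$-set); (ii) your final paragraph about placing $r_N$ in $\Sy^{m-N,\psi^N}$ actually anticipates Proposition~\ref{APP:PSEUDO:CALC:CON:EstR}, which requires the extra hypotheses \eqref{APP:PSEUDO:CALC:CON:EstR:EstA}--\eqref{APP:PSEUDO:CALC:CON:EstR:Lambda} and is not part of the present statement.
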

	\begin{proof}
		Relations \eqref{APP:PSEUDO:CALC:CON:eq1}, \eqref{APP:PSEUDO:CALC:CON:eq2} and \eqref{APP:PSEUDO:CALC:CON:eq3} are derived in \cite{Kajitani.1983, Kajitani.2006} for the case $\psi(s) = s^\kappa$. Deriving these equations for general $\psi$ works just as described there. Estimate \eqref{APP:PSEUDO:CALC:CON:est1} is obtained by straightforward calculation. We have
		\begin{align*}
			|\partial_\xi^\alpha \chi_\gamma(\xi)|
			&=
			\frac{1}{\gamma!}\Big|\partial_\xi^\alpha \Big( e^{-\lambda \psi(\jxi)} \partial_\nu^\gamma\big(e^{\lambda \psi(\jbl \nu \jbr)}\big)\Big|_{\nu = \xi}\Big)\Big|\\
			&=
			\frac{1}{\gamma!}\Big|\partial_\xi^\alpha \Big( e^{-\lambda \psi(\jxi)} e^{\lambda \psi(\jxi)} Q_\gamma(\xi)\Big)\Big|\\	
			&\leq
			C_{\alpha,\,\gamma} \jxi^{- |\alpha| - |\gamma|} (\psi(\jxi))^{|\gamma|},
		\end{align*}
		where we used that
		\begin{equation*}
			\partial_\nu^\gamma\big(e^{\lambda \psi(\jbl\nu\jbr)}\big)\Big|_{\nu = \xi} = e^{\lambda \psi(\jxi)} Q_\gamma(\xi),
		\end{equation*}
		and applied \eqref{APP:PSEUDO:CALC:CON:EstEta} to
		\begin{equation*}
			Q_\gamma(\xi) = C_\gamma \psi^\prime(\jxi)^{|\gamma|} +R_\gamma(\xi),
		\end{equation*}
		where $R_\gamma(\xi)$ are lower order terms satisfying
		\begin{equation*}
			|R_\gamma(\xi)| \leq C_\gamma \psi^\prime(\jxi)^{|\gamma| - 1} \psi^{\prime\prime}(\jxi).
		\end{equation*}
		This yields estimate \eqref{APP:PSEUDO:CALC:CON:est1}.	
	\end{proof}
	\begin{Remark}
		By estimate \eqref{APP:PSEUDO:CALC:CON:est1} we are immediately able to conclude that $\chi_\gamma \in S^0$ for all $|\gamma| > 0$, since $\psi(\jxi) = o(\jxi)$.
	\end{Remark}
	
	Proposition~\ref{APP:PSEUDO:CALC:CON} does not provide an estimate for $r_N(x,\,\xi)$. In order to derive such an estimate we pose additional assumptions on the operator $a$ and the function $\psi$.
	
	\begin{Proposition}[Estimating the remainder $r_N(x,\,\xi)$]\label{APP:PSEUDO:CALC:CON:EstR}
		Take $a \in \OPS^m$ and $\psi \in C^\infty$ as in 	Proposition~\ref{APP:PSEUDO:CALC:CON}. Assume additionally that the symbol $a=a(x,\,\xi) \in \Sy^m$ is such that
		\begin{equation}\label{APP:PSEUDO:CALC:CON:EstR:EstA}
		|\partial_\xi^\alpha D_x^\beta a(x,\,\xi)| \leq C_\alpha K_{|\beta|} \jxi^{m-|\alpha|}
		\end{equation}
		for all $x,\,\xi \in \R^n$. Here $K_{|\beta|}$ is a weight sequence such that
		\begin{equation}\label{APP:PSEUDO:CALC:CON:EstR:EstK}
		\inf\limits_{p\in \N}\frac{K_{p}}{\jxi^{p}} \leq C e^{- \delta_0 \psi(\jxi)}
		\end{equation}
		for some $\delta_0 > 0$. Furthermore, we suppose that the relation
		\begin{equation}\label{APP:PSEUDO:CALC:CON:EstR:EtaAdditive}
		\psi(\jbl \xi + \zeta \jbr) \leq \psi(\jxi) + \psi(\jbl \zeta \jbr)
		\end{equation}
		holds for all large $\xi,\,\zeta \in \R^n$.
		We assume that the constant $\lambda > 0$ is such that there exists another positive constant $c_0$ such that
		\begin{equation}\label{APP:PSEUDO:CALC:CON:EstR:Lambda}
		\delta_0 - \lambda = c_0 > 0.
		\end{equation}
		Then the remainder $r_N(x,\,\xi)$ given by \eqref{APP:PSEUDO:CALC:CON:eq3} satisfies the estimate
		\begin{equation}\label{APP:PSEUDO:CALC:CON:EstR:EstR}
		\big|\partial_\xi^\alpha D_x^\beta r_N(x,\,\xi)\big| \leq C_{\alpha,\,\beta,\,N}  \lambda^{N}  \jxi^{m-|\alpha| - N} \psi(\jxi)^{N}
		\end{equation}
		for $(x,\,\xi) \in \R^n\times\R^n$ and $\alpha,\beta \in \N^n$.
	\end{Proposition}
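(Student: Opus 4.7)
The plan is to apply $\partial_\xi^\alpha D_x^\beta$ to the oscillatory integral representation \eqref{APP:PSEUDO:CALC:CON:eq3} of $r_N$, then carry out integration by parts in $y$ and $\zeta$ to bring the integral into absolutely convergent form, and finally estimate by splitting the $\zeta$-region according to the size of $|\zeta|$ relative to $\jxi$. By Leibniz, all $x$-derivatives land on $a_{(\gamma)}(x+\vartheta y,\xi)$ (since $\chi_\gamma(\xi+\zeta)$ does not depend on $x$), while the $\xi$-derivatives split as $\alpha=\alpha'+\alpha''$ between the two factors.

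The standard IBP uses $(1-\Delta_y)e^{-\I y\zeta}=\jbl\zeta\jbr^{2}e^{-\I y\zeta}$ and $(1-\Delta_\zeta)e^{-\I y\zeta}=\jbl y\jbr^{2}e^{-\I y\zeta}$ iterated $k_1$ and $k_2$ times respectively, with $k_1,k_2>n/2$ chosen large enough for integrability in $y$ and $\zeta$. This transfers $2k_1$ extra $y$-derivatives to $a_{(\gamma)}(x+\vartheta y,\xi)$, yielding via \eqref{APP:PSEUDO:CALC:CON:EstR:EstA} a bound of the form $\vartheta^{2k_1}K_{|\beta|+2k_1}\jxi^{m-N-|\alpha'|}$, and $2k_2$ extra $\xi$-derivatives on $\chi_\gamma(\xi+\zeta)$. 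A refinement of the proof of estimate \eqref{APP:PSEUDO:CALC:CON:est1} --- unpacking the polynomial $Q_\gamma$ whose leading term is $\lambda^{|\gamma|}\psi^\prime(\jbl\nu\jbr)^{|\gamma|}$ modulo combinatorial factors --- produces the needed factor $\lambda^N$ explicitly, and yields $|\partial_\xi^{\alpha''+\nu}\chi_\gamma(\xi+\zeta)|\leq C_{\alpha'',\nu,\gamma}\lambda^{N}\jbl\xi+\zeta\jbr^{-|\alpha''|-|\nu|-N}\psi(\jbl\xi+\zeta\jbr)^{N}$ for any $\nu$ with $|\nu|\leq 2k_2$.

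It then remains to carry out the $y$- and $\zeta$-integrations. The $y$-integral is bounded by $\int\jbl y\jbr^{-2k_2}\,dy<\infty$. For the $\zeta$-integral, I would split into the inner region $|\zeta|\leq\jxi/2$, on which $\jbl\xi+\zeta\jbr\sim\jxi$ and the bound follows directly (the $\zeta$-integrability being supplied by $\jbl\zeta\jbr^{-2k_1}$), and the outer region $|\zeta|>\jxi/2$, on which I would apply \eqref{APP:PSEUDO:CALC:CON:EstR:EtaAdditive} to bound $\psi(\jbl\xi+\zeta\jbr)^N\leq 2^{N-1}\bigl(\psi(\jxi)^N+\psi(\jbl\zeta\jbr)^N\bigr)$. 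The first piece produces directly the desired factor $\psi(\jxi)^N$; the second is absorbed into the $\jbl\zeta\jbr^{-2k_1}$-decay, using that $\psi(\jbl\zeta\jbr)=o(\jbl\zeta\jbr)$ and enlarging $k_1$ if needed. This is precisely where the assumptions \eqref{APP:PSEUDO:CALC:CON:EstR:EstK} and $\delta_0-\lambda=c_0>0$ from \eqref{APP:PSEUDO:CALC:CON:EstR:Lambda} enter: the constant $K_{|\beta|+2k_1}$ together with the polynomial loss in $\jxi$ and $\jbl\zeta\jbr$ is balanced via the infimum estimate $K_p/\jxi^p\leq Ce^{-\delta_0\psi(\jxi)}$, the positivity of $c_0$ guaranteeing a strict decay margin in $e^{-c_0\psi(\jxi)}$ that can absorb the remaining polynomial factors and the power $\psi(\jxi)^N$ coming from $\chi_\gamma$, without spoiling the final estimate.

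The main technical hurdle is the careful bookkeeping in this outer region: one must ensure that the constant $K_{|\beta|+2k_1}$ together with the $\jxi^{|\beta|}$-type growth arising from the infimum condition does not leak into the $\jxi$-exponent of the final estimate, which must remain $m-|\alpha|-N$ independently of $\beta$. Correctly aligning the $\lambda^N$ factor from $\chi_\gamma$ with the $e^{-\delta_0\psi(\jxi)}$ decay of the weight sequence, exploiting the margin $c_0=\delta_0-\lambda>0$, is the crucial step that makes the bound \eqref{APP:PSEUDO:CALC:CON:EstR:EstR} go through.
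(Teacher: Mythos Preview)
There is a genuine gap in your treatment of the $\chi_\gamma$-factor. Recall from \eqref{APP:PSEUDO:CALC:CON:eq2} that $\chi_\gamma(\cdot)$ carries the prefactor $e^{-\lambda\psi(\jxi)}$ with the \emph{fixed} base point $\xi$, while in the remainder \eqref{APP:PSEUDO:CALC:CON:eq3} the derivative $\partial_\nu^\gamma e^{\lambda\psi(\jbl\nu\jbr)}$ is evaluated at $\nu=\xi+\zeta$. Hence
\[
\chi_\gamma(\xi+\zeta)=\tfrac{1}{\gamma!}\,e^{-\lambda\psi(\jxi)}e^{\lambda\psi(\jbl\xi+\zeta\jbr)}Q_\gamma(\xi+\zeta),
\]
and the exponentials do \emph{not} cancel: by subadditivity \eqref{APP:PSEUDO:CALC:CON:EstR:EtaAdditive} the surviving factor is $e^{\lambda(\psi(\jbl\xi+\zeta\jbr)-\psi(\jxi))}\leq e^{\lambda\psi(\jbl\zeta\jbr)}$. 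Your claimed bound $|\partial_\xi^{\alpha''+\nu}\chi_\gamma(\xi+\zeta)|\leq C\lambda^N\jbl\xi+\zeta\jbr^{-|\alpha''|-|\nu|-N}\psi(\jbl\xi+\zeta\jbr)^N$, obtained by analogy with \eqref{APP:PSEUDO:CALC:CON:est1}, is valid only when the two evaluation points coincide; here it misses precisely this exponential growth in $\zeta$. No fixed polynomial weight $\jbl\zeta\jbr^{-2k_1}$ can compensate $e^{\lambda\psi(\jbl\zeta\jbr)}$, so with $k_1$ fixed the $\zeta$-integral in the outer region diverges.

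This is also why the infimum condition \eqref{APP:PSEUDO:CALC:CON:EstR:EstK} must be used in the $\zeta$-variable, not in $\xi$ as you suggest. The paper integrates by parts in $y$ an \emph{arbitrary} number $|\kappa|$ of times, producing $K_{|\kappa|}\jbl\zeta\jbr^{-|\kappa|}$ (not $K_{|\beta|+2k_1}$ with fixed $k_1$), and then optimizes over $|\kappa|$ via \eqref{APP:PSEUDO:CALC:CON:EstR:EstK} to get $e^{-\delta_0\psi(\jbl\zeta\jbr)}$; combined with the $e^{\lambda\psi(\jbl\zeta\jbr)}$ above this yields $e^{-c_0\psi(\jbl\zeta\jbr)}$, which is what makes the $\zeta$-integral converge. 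The margin $c_0=\delta_0-\lambda>0$ is needed in $\zeta$, not in $\xi$. Once you restore the exponential factor in your $\chi_\gamma$-estimate and let the number of $y$-derivatives depend on $\zeta$, the argument goes through as in the paper; your inner/outer splitting then becomes unnecessary, since the exponential decay in $\zeta$ handles both regions at once.
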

	\begin{proof}
		Our proof essentially follows the strategy presented in \cite{Kajitani.1983, Kajitani.1989}.
		We have
		\begin{equation*}
			\begin{aligned}
				|\partial_\xi^\alpha D_x^\beta r_N(x,\,\xi)|
				=
				\bigg|\frac{N}{(2\pi)^n} \sum\limits_{\substack{|\gamma| = N\\\alpha^\prime + \alpha^{\prime\prime} = \alpha}}\binom{\alpha}{\alpha^\prime}
				\Osii \int\limits_0^1 \frac{(1-\vartheta)^{N-1}}{\gamma !}\\
				\times \widetilde F_{\alpha^{\prime\prime},\,\gamma,\,\beta}(x,\,y,\,\xi,\,\zeta,\,\vartheta) G_{\alpha^\prime,\,\gamma}(\xi,\,\zeta) \rmd\vartheta \rmd y \rmd\zeta\bigg|,
			\end{aligned}
		\end{equation*}
		where
		\begin{align*}
			\widetilde F_{\alpha^{\prime\prime},\,\gamma,\,\beta}(x,\,y,\,\xi,\,\zeta,\,\vartheta) &= e^{-\I y \cdot \zeta} \partial_\xi^{\alpha^{\prime\prime}} D_x^{\gamma + \beta} a(x+\vartheta y,\,\xi), \text{ and }\\
			G_{\alpha^\prime,\,\gamma}(\xi,\,\zeta) &= \partial_\xi^{\alpha^\prime}\Big( e^{-\lambda \psi(\jxi)} \partial_\nu^{\gamma}\big( e^{\lambda \psi(\jbl \nu \jbr)}\big|_{\nu = \xi + \zeta}\big)\Big).
		\end{align*}
		Considering $F_{\alpha^{\prime\prime},\,\gamma,\,\beta}= F_{\alpha^{\prime\prime},\,\gamma,\,\beta}(x,\,\xi,\,\zeta)$ with
		\begin{align*}
			F_{\alpha^{\prime\prime},\,\gamma,\,\beta}=  \Osi \int\limits_0^1 (1-\vartheta)^{N-1}  \widetilde F_{\alpha^{\prime\prime},\,\gamma,\,\beta}(x,\,y,\,\xi,\,\zeta,\,\vartheta)  \rmd\vartheta \rmd y,
		\end{align*}
		and obtain for $|\zeta| \geq 1$ that
		\begin{align*}
			|\zeta^\kappa F_{\alpha^{\prime\prime},\,\gamma,\,\beta}|
			&=
			\bigg|\Osi \int\limits_0^1 (1-\vartheta)^{N-1} \zeta^\kappa e^{-\I y \cdot \zeta} \partial_\xi^{\alpha^{\prime\prime}} D_x^{\gamma + \beta} a(x+\vartheta y,\,\xi)  \rmd\vartheta \rmd y\bigg|\\
			&=
			\bigg|\Osi \int\limits_0^1 (1-\vartheta)^{N-1}  e^{-\I y \cdot \zeta} \partial_\xi^{\alpha^{\prime\prime}} D_y^\kappa D_x^{\gamma + \beta} a(x+\vartheta y,\,\xi)  \rmd\vartheta \rmd y\bigg|\\
			&\leq
			C_{N,\,\alpha^{\prime\prime}} K_{|\gamma| + |\beta| + |\kappa|} \jxi^{m-|\alpha^{\prime\prime}|},
		\end{align*}
		where we used $ \zeta^\kappa e^{-\I y \cdot \zeta} = (-D_y)^\kappa e^{-\I y \cdot \zeta}$ and integrated by parts. For $|\zeta| \geq 1$ we know that $\jbl \zeta \jbr \leq \sqrt{2}|\zeta|$. Hence, it is clear that
		\begin{equation*}
			\begin{aligned}
				|F_{\alpha^{\prime\prime},\,\gamma,\,\beta}| &\leq C_{N,\,\alpha^{\prime\prime}} K_{|\gamma| + |\beta| + |\kappa|} \jxi^{m-|\alpha^{\prime\prime}|} |\zeta|^{-|\kappa|}  \\&\leq \sqrt{2}^{|\kappa|} C_{N,\,\alpha^{\prime\prime}} K_{|\gamma| + |\beta| + |\kappa|} \jxi^{m-|\alpha^{\prime\prime}|} \jbl\zeta\jbr^{-|\kappa|}.
			\end{aligned}
		\end{equation*}
		In the case $|\zeta| < 1$ we have $\jbl\zeta\jbr^{|\kappa|} < \sqrt{2}^{|\kappa|}$. This allows us to conclude
		\begin{align*}
			|F_{\alpha^{\prime\prime},\,\gamma,\,\beta}|
			&\leq
			\Osi \int\limits_0^1 (1-\vartheta)^{N-1} \big|\partial_\xi^{\alpha^{\prime\prime}} D_x^{\gamma + \beta} a(x+\vartheta y,\,\xi)\big|  \rmd\vartheta \rmd y\\
			&\leq
			C_{N,\,\alpha^{\prime\prime}} K_{|\gamma| + |\beta|} \jxi^{m-|\alpha^{\prime\prime}|}\\
			&\leq
			\sqrt{2}^{|\kappa|} C_{N,\,\alpha^{\prime\prime}} K_{|\gamma| + |\beta| + |\kappa|} \jxi^{m-|\alpha^{\prime\prime}|} \jbl\zeta\jbr^{-|\kappa|}.
		\end{align*}
		We combine the estimates of both cases and obtain
		\begin{align}
			\nonumber
			|F_{\alpha^{\prime\prime},\,\gamma,\,\beta}(x,\,\xi,\,\zeta)|
			&\leq
			\sqrt{2}^{|\kappa|} C_{N,\,\alpha^{\prime\prime}} K_{|\gamma| + |\beta| + |\kappa|} \jxi^{m-|\alpha^{\prime\prime}|} \jbl\zeta\jbr^{-|\kappa|}\\
			&\leq
			\sqrt{2}^{|\kappa|} C_{N,\,\alpha^{\prime\prime},\,\gamma,\,\beta} K_{|\kappa|} \jxi^{m-|\alpha^{\prime\prime}|} \jbl\zeta\jbr^{-|\kappa|} \label{APP:PSEUDO:CALC:CON:EstR:EstF}
		\end{align}
		for all $x,\,\xi,\,\zeta \in \R^n$.
		
		Next, we consider
		\begin{equation*}
			G_{\alpha^\prime,\,\gamma}(\xi,\,\zeta) = \partial_\xi^{\alpha^\prime}\Big( e^{-\lambda \psi(\jxi)} \partial_\nu^{\gamma}\big( e^{\lambda \psi(\jbl \nu \jbr)}\big|_{\nu = \xi + \zeta}\big)\Big),
		\end{equation*}
		and find that
		\begin{align*}
			\partial_\nu^{\gamma}\big( e^{\lambda \psi(\jbl \nu \jbr)}\big|_{\nu = \xi + \zeta}\big) &=  e^{\lambda \psi(\jbl \xi + \zeta \jbr)} Q_{1,\,\gamma}(\xi + \zeta),\\
			\partial_\xi^{\alpha^\prime}\Big( e^{-\lambda \psi(\jxi)} \partial_\nu^{\gamma }\big( e^{\lambda \psi(\jbl \nu \jbr)}\big|_{\nu = \xi + \zeta}\big)\Big) &= e^{\lambda(\psi(\jbl \xi + \zeta \jbr)- \psi(\jxi))} Q_{2,\,\gamma,\,\alpha^\prime}(\xi+\zeta,\,\xi),
		\end{align*}
		where
		\begin{align}
			\label{APP:PSEUDO:CALC:CON:EstR:EstQ1}
			|Q_{1,\,\gamma}(\xi + \zeta)| &\leq C_{\gamma} \lambda^{|\gamma|} \psi^{\prime}(\jbl \xi + \zeta \jbr)^{|\gamma|},\\
			\nonumber
			|Q_{2,\,\gamma,\,\alpha^\prime}(\xi+\zeta,\,\xi)| &\leq C_{\gamma,\,\alpha^{\prime}} |Q_{1,\,\gamma}(\xi + \zeta)| \lambda^{|\alpha^{\prime}|} \big|\psi^{\prime}(\jbl \xi + \zeta \jbr) - \psi^\prime(\jxi)\big|^{|\alpha^{\prime}|}\\
			&\leq\label{APP:PSEUDO:CALC:CON:EstR:EstQ2}
			C_{\gamma,\,\alpha^{\prime}}  \lambda^{|\gamma|+ |\alpha^{\prime}|} \psi^{\prime}(\jbl \xi + \zeta \jbr)^{|\gamma|} \big|\psi^{\prime}(\jbl \xi + \zeta \jbr) - \psi^\prime(\jxi)\big|^{|\alpha^{\prime}|}.
		\end{align}
		For some $\widetilde \vartheta \in (0,\,1)$, we have the estimate
		\begin{align}
			\nonumber
			|\psi^{\prime}(\jbl \xi + \zeta \jbr) - \psi^\prime(\jxi)|^{|\alpha^{\prime}|}
			&=
			|\psi^{\prime\prime}(\jbl \xi + \zeta \jbr - \widetilde \vartheta \jxi)|^{|\alpha^\prime|} |\jbl \xi + \zeta \jbr - \jxi|^{|\alpha^\prime|}\\
			\nonumber
			&\leq
			C \bigg|\frac{\psi(\jbl \xi + \zeta\jbr - \vartheta \jxi)}{|\jbl \xi + \zeta \jbr - \widetilde \vartheta \jxi|^2}\bigg|^{|\alpha^\prime|} \jbl  \zeta \jbr^{|\alpha^\prime|}\\
			&\leq
			C \jxi^{-|\alpha^\prime|}\jbl  \zeta \jbr^{|\alpha^\prime|}, \label{APP:PSEUDO:CALC:CON:EstR:EstPsi}
		\end{align}
		where we applied assumption \eqref{APP:PSEUDO:CALC:CON:EstEta}. Combining \eqref{APP:PSEUDO:CALC:CON:EstR:EstQ1}, \eqref{APP:PSEUDO:CALC:CON:EstR:EstQ2} and \eqref{APP:PSEUDO:CALC:CON:EstR:EstPsi}, we obtain
		\begin{align*}
			|G_{\alpha^\prime,\,\gamma}(\xi,\,\zeta)|
			&\leq
				\begin{aligned}[t]
					C_{\gamma,\,\alpha^{\prime}}  \lambda^{|\gamma|+ |\alpha^{\prime}|} e^{\lambda \big(\psi(\jbl \xi + \zeta \jbr) - \psi(\jxi)\big)} &\psi^{\prime}(\jbl \xi + \zeta \jbr)^{|\gamma|}\\ &\times\big|\psi^{\prime}(\jbl \xi + \zeta \jbr) - \psi^\prime(\jxi)\big|^{|\alpha^{\prime}|}
				\end{aligned}\\
			&\leq
			C_{\gamma,\,\alpha^{\prime}}  \lambda^{|\gamma|+ |\alpha^{\prime}|} e^{\lambda \big(\psi(\jbl \xi + \zeta \jbr) - \psi(\jxi)\big)} \frac{\psi(\jbl \xi + \zeta \jbr)^{|\gamma|}}{\jbl \xi + \zeta \jbr^{|\gamma|}} \jxi^{-|\alpha^\prime|} \jbl\zeta\jbr^{|\alpha^\prime|}\\
			&\leq
			C_{\gamma,\,\alpha^{\prime}}  \lambda^{|\gamma|+ |\alpha^{\prime}|} e^{\lambda \big(\psi(\jbl \xi + \zeta \jbr) - \psi(\jxi)\big)} \frac{\jbl\zeta\jbr^{|\alpha^\prime|}\psi(\jbl \xi + \zeta \jbr)^{|\gamma|}}{\jxi^{|\alpha^\prime|+ |\gamma|}}.
		\end{align*}
		In view of assumption \eqref{APP:PSEUDO:CALC:CON:EstR:EtaAdditive}, i.e. $\psi(\jbl \xi + \zeta \jbr) \leq \psi(\jxi) + \psi(\jbl \zeta \jbr)$, we conclude that
		\begin{equation}
		|G_{\alpha^\prime,\,\gamma}(\xi,\,\zeta)|
		\leq
		C_{\gamma,\,\alpha^\prime} \lambda^{|\gamma|+|\alpha^{\prime}| } e^{\lambda \psi(\jbl\zeta \jbr) } \frac{\jbl\zeta\jbr^{|\alpha^\prime|}(\psi(\jxi) + \psi(\jbl \zeta \jbr))^{|\gamma|}}{\jxi^{|\alpha^\prime|+ |\gamma|}}.
		\label{APP:PSEUDO:CALC:CON:EstR:EstG}
		\end{equation}
		Combining \eqref{APP:PSEUDO:CALC:CON:EstR:EstF} and \eqref{APP:PSEUDO:CALC:CON:EstR:EstG} implies
		\begin{equation*}
			\begin{aligned}
				|\partial_\xi^\alpha D_x^\beta r_N(x,\,\xi)|
				&\leq
				\frac{N}{(2\pi)^n N!} \sum\limits_{\substack{|\gamma| = N\\\alpha^\prime + \alpha^{\prime\prime} = \alpha\\\delta^\prime + \delta^{\prime\prime} = \alpha^\prime}}
				\binom{\alpha}{\alpha^\prime}\binom{\alpha^\prime}{\delta^\prime}
				C_{N,\,\alpha,\,\beta}\sqrt{2}^{|\kappa|}\Osi e^{\lambda \psi(\jbl\zeta \jbr)}\\ &\times\lambda^{|\gamma|} K_{|\kappa|} \jbl\zeta\jbr^{-(|\kappa|-|\alpha^\prime|)} (\psi(\jxi) + \psi(\jbl \zeta \jbr))^{|\gamma|}
				\jxi^{m-|\alpha| - |\gamma|} \rmd\zeta.
			\end{aligned}
		\end{equation*}
		We use $K_{|\kappa|} \leq C_\alpha K_{|\kappa|-|\alpha^\prime|}$ and apply assumption \eqref{APP:PSEUDO:CALC:CON:EstR:EstK} to choose $\kappa$ such that
		\begin{equation*}
			K_{|\kappa|-|\alpha^\prime|} \jbl\zeta\jbr^{-(|\kappa|-|\alpha^\prime|)} \leq C e^{-\delta_0 \psi(\jbl \zeta \jbr)}.
		\end{equation*}
		This yields
		\begin{equation*}
			\begin{aligned}
				|\partial_\xi^\alpha D_x^\beta r_N(x,\,\xi)|
				\leq
				\frac{N}{(2\pi)^n N!} \sum\limits_{|\gamma| = N}
				C_{N,\,\alpha,\,\beta}\sqrt{2}^{|\kappa|} \lambda^{|\gamma|}
				\Osi e^{-(\delta_0-\lambda)\psi(\jbl\zeta \jbr)}\\\times (\psi(\jxi) + \psi(\jbl \zeta \jbr))^{|\gamma|}\jxi^{m-|\alpha| - |\gamma|} \rmd\zeta.
			\end{aligned}
		\end{equation*}
		By assumption \eqref{APP:PSEUDO:CALC:CON:EstR:Lambda} there exists a positive constant $c_0$ such that $\delta_0 - \lambda = c_0 > 0$. We conclude that
		\begin{align*}
			|\partial_\xi^\alpha D_x^\beta r_N(x,\,\xi)|
			&\leq
			\begin{aligned}[t]
				C_{\alpha,\,\beta,\,N} \sum\limits_{|\gamma| = N} \jxi^{m-|\alpha| - |\gamma|}
				\lambda^{|\gamma|}
				\Osi e^{- c_0 \psi(\jbl\zeta \jbr)} \sqrt{2}^{|\kappa|} 
				\\\times (\psi(\jxi) + \psi(\jbl \zeta \jbr))^{|\gamma|}\rmd\zeta
			\end{aligned}\\
			&\leq
			\begin{aligned}[t]
				C_{\alpha,\,\beta,\,N} \lambda^{N} \jxi^{m-|\alpha| - N}   \sum\limits_{|\gamma| = N}	 
				\Osi e^{- c_0 \psi(\jbl\zeta \jbr)} 
				\sqrt{2}^{|\kappa|}\\\times(\psi(\jxi)^{|\gamma|} + \psi(\jbl \zeta \jbr)^{|\gamma|})  \rmd\zeta
			\end{aligned}\\
			&\leq
			C_{\alpha,\,\beta,\,N}  \lambda^{N}  \jxi^{m-|\alpha| - N} \psi(\jxi)^{N} \Osi e^{- \frac{c_0}{2} \psi(\jbl\zeta \jbr)} \sqrt{2}^{|\kappa|}  \rmd\zeta\\
			&\leq
			C_{\alpha,\,\beta,\,N}  \lambda^{N}  \jxi^{m-|\alpha| - N} \psi(\jxi)^{N}.
		\end{align*}
		This concludes the proof.	
	\end{proof}

	\section{Statement of the Results}\label{RESULTS}
	
	Depending on the modulus of continuity of the coefficients, we expect to have an at most finite loss of derivatives (for strong moduli of continuity) or an infinite loss of derivatives (for weak moduli of continuity). We account for this difference by stating two different theorems, one for strong and one for weak moduli of continuity.
	
	In both cases we consider the Cauchy problem
	\begin{equation}\label{SH:MR:THEOREM:WEAK:CauchyProblem}
	\begin{cases}
	D_t^m u = \sum\limits_{j=0}^{m-1} A_{m-j}(t,\,x,\,D_x) D_t^j u  + f(t,x),\\
	D_t^{k-1} u(0,x) = g_k(x), \quad (t,x) \in [0, T]\times \R^n,\,k = 1,\,\ldots,\,m,
	\end{cases}
	\end{equation}
	where
	\begin{equation*}
		A_{m-j}(t,\,x,\,D_x) = \sum\limits_{|\gamma| + j = m} a_{m-j, \gamma}(t,\,x) D_x^\gamma + \sum\limits_{|\gamma| + j \leq m-1} a_{m-j,\gamma}(t,\,x) D_x^\gamma.
	\end{equation*}
	We assume in both cases the following conditions:
	\begin{enumerate}[label = (SH\arabic*), align = left, leftmargin=*]
		\item \label{RESULTS:Both:1} \label{SH:MR:THEOREM:STRONG:ASSUME:StrictHyp} The Cauchy problem is strictly hyperbolic, i.e. that the characteristic roots $\tau_k(t,\,x,\,\xi),~k = 1,\,\ldots,\,m,$ of the principal part
		\begin{equation*}
			\begin{aligned}
			P_m(t,\,x,\,\tau,\,\xi) &= \tau^m - \sum\limits_{j=0}^{m-1} A_{(m-j)}(t,\,x,\,\xi) \tau^j\\ &= \tau^m - \sum\limits_{j=0}^{m-1} \sum\limits_{|\gamma| + j = m} a_{m-j, \gamma}(t,\,x) \xi^\gamma \tau^j,
			\end{aligned}
		\end{equation*}
		are real when $|\xi| \neq 0$, simple and numbered in such a way that
		\begin{equation*}
			\tau_1(t,\,x,\,\xi) < \tau_2(t,\,x,\,\xi) < \ldots < \tau_m(t,\,x,\,\xi),
		\end{equation*}
		for all $t\in [0,\,T],\,x,\,\xi \in \R^n$.
		\item \label{RESULTS:Both:2} The coefficients $a_{m-j,\,\gamma}(t,\,x)$ of the lower order terms (i.e. $|\gamma| + j \leq m-1$) are continuous in time and the coefficients $a_{m-j,\,\gamma}(t,\,x)$ of the principal part (i.e. $|\gamma| + j = m$) satisfy
		\begin{equation*}
		\big|D_x^\beta a_{m-j,\,\gamma}(t,\,x) - D_x^\beta a_{m-j,\,\gamma}(s,\,x)\big| \leq C K_{|\beta|} \mu(|t-s|),
		\end{equation*}
		for some constant $C > 0$ and a weight sequence $K_p$, all $t,s \in [0,\, T]$, all $\beta\in\N^n$ and fixed $x\in\R^n$, where $\mu$ is a strong or weak modulus of continuity.
		\item \label{RESULTS:Both:3} The modulus of continuity $\mu$ in \ref{RESULTS:Both:2} can be written in the form
		\begin{equation*}
			\mu(s) = s\omega(s^{-1}),
		\end{equation*}
		where $\omega(s)$ is a non-decreasing, smooth function on $[0,\,1]$.
	\end{enumerate}

	\subsection{Result for Strong Moduli of Continuity}\label{SH:MR:THEOREM:STRONG}
	
	We are able to prove the following well-posedness result if we assume \ref{RESULTS:Both:1}, \ref{RESULTS:Both:2} and \ref{RESULTS:Both:3} as well as the following conditions:
	\begin{enumerate}[label = (SH\arabic*-S), align = left, leftmargin=*]
		\item \label{SH:MR:THEOREM:STRONG:ASSUME:Coeff:Time} The modulus of continuity $\mu$ in \ref{RESULTS:Both:2} is strong and the weight sequence $K_p$ is arbitrary.
		\item \label{SH:MR:THEOREM:STRONG:ASSUME:Coeff:Space}
		All the coefficients $a_{m-j,\,\gamma}$ belong to $C\big([0,\,T];\,\B^\infty\big)$.
		\item \label{SH:MR:THEOREM:STRONG:ASSUME:DATA} The initial data $g_k$ belongs to $H^{\nu+m-k}$ for $k=1,\,\cdots,\,m$.
		\item \label{SH:MR:THEOREM:STRONG:ASSUME:Inhomogeneity} The right-hand side $f=f(t,x) \in C([0,\,T];\,H^\nu)$.
		\item \label{SH:MR:THEOREM:STRONG:ASSUME:Eta} The function $\omega$ is smooth and satisfies
		\begin{equation}\label{SH:MR:THEOREM:STRONG:ASSUME:Eta:diff}
		\bigg|\frac{\rmd^k}{\rmd s^k}\omega (s) \bigg| \leq C_k s^{-k} \omega(s)
		\end{equation}
		for all $k \in \N$ and large $s \in \R^+$.
	\end{enumerate}
	
	\begin{Theorem}\label{SH:MR:THEOREM:STRONG:MainTheorem}
		Consider the \hyperref[SH:MR:THEOREM:WEAK:CauchyProblem]{Cauchy problem \eqref{SH:MR:THEOREM:WEAK:CauchyProblem}}. Under the above assumptions, there is a $\kappa> 0$ such that for every $s \in \R$ there exists a unique global (in time) solution
		\begin{equation*}
			u \in \bigcap\limits_{j = 0}^{m-1} C^{m-1-j}\big([0,T];\, H^{\nu+j}_{\omega,\,-\kappa T}\big).
		\end{equation*}
		The solution satisfies the a-priori estimate
		\begin{equation*}
			\begin{aligned}[t]
				& \sum\limits_{j = 0}^{m-1} \big\|\jbl D_x \jbr^{\nu+m-1-j} e^{- \kappa t \omega(\jbl D_x \jbr)} \partial_t^j u(t,\,\cdot)\big\|^2_{L^2} \\ & \quad \leq C \Big(\sum\limits_{k = 1}^{m} \big\|\jbl D_x \jbr^{\nu+m-k} g_k(0,\,\cdot)\big\|^2_{L^2} + \int\limits_0^t \big\|\jbl D_x \jbr^\nu e^{- \kappa z \omega(\jbl D_x \jbr)} f(z,\,\cdot)\big\|^2_{L^2} \rmd z\Big)
			\end{aligned}
		\end{equation*}
		for $0 \leq t \leq T$ and some $C = C_s > 0$.
	\end{Theorem}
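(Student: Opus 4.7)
My approach follows the classical programme of Colombini--Lerner and Cicognani adapted to general moduli of continuity. First I would rewrite the scalar equation as a first order $m\times m$ pseudodifferential system by setting $U_j = \jxi^{m-j} D_t^{j-1} u$ for $j=1,\ldots,m$, obtaining $D_t U = \A(t,x,D_x) U + F$ whose principal symbol is a matrix with characteristic polynomial $P_m$. Strict hyperbolicity~\ref{RESULTS:Both:1} guarantees that the eigenvalues $\tau_k(t,x,\xi)$ are real, simple, and positively homogeneous of degree one in $\xi$, and by~\ref{SH:MR:THEOREM:STRONG:ASSUME:Coeff:Space} they inherit $\B^\infty$ bounds in $x$. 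I would then construct a microlocal diagonalizer $M(t,x,\xi)$ from the corresponding eigenvectors, turning the principal part into $\mathrm{diag}(\tau_1,\ldots,\tau_m)$ modulo $\OPS^0$.

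The central step is to regularize the coefficients in time at a frequency-dependent scale. For a standard mollifier $\rho$ and $\ve(\xi)=1/\jxi$, set $a_{m-j,\gamma}^\sharp(t,x,\xi) = (a_{m-j,\gamma}(\cdot,x)*\rho_{\ve(\xi)})(t)$. Then assumptions~\ref{RESULTS:Both:2} and~\ref{RESULTS:Both:3} give
\begin{equation*}
	\bigl|a_{m-j,\gamma}-a_{m-j,\gamma}^\sharp\bigr| \leq C\mu(\ve(\xi)) = C\jxi^{-1}\omega(\jxi), \qquad \bigl|\partial_t a_{m-j,\gamma}^\sharp\bigr| \leq C\omega(\jxi),
\end{equation*}
so that, after multiplication by the principal weight $\jxi$, both error terms live in the weighted class $\Sy^{0,\omega}$. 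Built from $a^\sharp$, the diagonalizer $M^\sharp$ is now differentiable in $t$, and the diagonalized system has the form $D_t V = TV + RV + F^\sharp$ with $T$ diagonal of order one with real symbol and $R\in\OPS^{0,\omega}$. Here~\ref{SH:MR:THEOREM:STRONG:ASSUME:Eta} is precisely hypothesis~\eqref{APP:PSEUDO:CALC:CON:EstEta} with $\psi=\omega$, so Proposition~\ref{APP:PSEUDO:CALC:CON} and the composition result Proposition~\ref{APP:PSEUDO:CALC:COMP:SmO} apply and the calculus with $\omega$-weighted symbols closes.

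Finally I would conjugate by $e^{-\kappa t\omega(\jbl D_x\jbr)}$: setting $W = e^{-\kappa t\omega(\jbl D_x\jbr)}\jxi^{\nu+m-1} V$, Proposition~\ref{APP:PSEUDO:CALC:CON} shows that the conjugates of $T$ and $R$ differ from them only by symbols again in $\OPS^{0,\omega}$, so that
\begin{equation*}
	\frac{d}{dt}\|W\|_{L^2}^2 = -2\kappa\jbl\omega(\jbl D_x\jbr) W,W\jbr + 2\Re\jbl(\I T + \I R_\omega)W, W\jbr + 2\Re\jbl F_\omega, W\jbr
\end{equation*}
modulo an $L^2$-bounded term. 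Sharp G{\aa}rding handles the $T$-term and the bad $\OPS^{0,\omega}$ contribution from $R_\omega$ is dominated by $C\|\omega(\jbl D_x\jbr)^{1/2} W\|_{L^2}^2$, which for $\kappa$ sufficiently large is absorbed by the negative term $-2\kappa\jbl\omega(\jbl D_x\jbr)W,W\jbr$. Gronwall then gives the stated a-priori estimate, and existence and uniqueness in the intersection space follow by the standard duality/approximation argument, since the adjoint Cauchy problem has the same form (run backwards in time).

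The main obstacle is the second step: matching the mollification scale exactly to $\omega$ so that every error term lands in $\Sy^{0,\omega}$ after passing through $M^\sharp$, its inverse, and the commutators generated by the diagonalization, while keeping all implicit constants independent of the approximation parameter. Once the $\omega$-weighted calculus closes and Proposition~\ref{APP:PSEUDO:CALC:CON} is in force, the final absorption by $\kappa\omega(\jbl D_x\jbr)$ is essentially mechanical; the strong modulus assumption~\ref{SH:MR:THEOREM:STRONG:ASSUME:Coeff:Time} is exactly what makes $\omega$ grow slowly enough (logarithmically) for this absorption to produce only the finite loss $\kappa T$ reflected in the $H^{\nu+j}_{\omega,-\kappa T}$ spaces.
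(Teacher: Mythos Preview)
Your outline is correct and lands on the same energy argument as the paper, but the route differs in two organizational respects worth noting.

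First, the paper regularizes the \emph{characteristic roots} $\tau_j$ themselves (Definition~\ref{SH:PROOF:ROOTS:DEF:Roots}, Proposition~\ref{SH:PROOF:ROOTS:Lemma:RegularRoots}) rather than the coefficients $a_{m-j,\gamma}$, and then uses these smooth-in-time $\lambda_j$ in a factorization-based change of variables $v_j=(D_t-\lambda_j)v_{j-1}$, $u_j=\jbl D_x\jbr^{m-1-j}v_j$. This produces an upper-triangular principal matrix from the outset, so the subsequent diagonalizer $H$ (Proposition~\ref{SH:PROOF:DIA:Lemma:Diagonalize}) is strictly upper-triangular and explicit. Your Sylvester reduction $U_j=\jbl D_x\jbr^{m-j}D_t^{j-1}u$ followed by eigenvector diagonalization of the companion matrix is the classical alternative; it works, but it forces the time-regularization through the eigenvectors and their inverses, which is exactly the bookkeeping you flag as the main obstacle. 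The paper's order of operations sidesteps that difficulty: since the $\lambda_j$ are already smooth in $t$, $(D_tH)\in\OPS^{0,\omega}$ is immediate.

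Second, for the conjugation step the paper does \emph{not} appeal to Proposition~\ref{APP:PSEUDO:CALC:CON}. Because the modulus is strong, $\omega(s)=O(\log s)$, so $e^{\pm\kappa t\omega(\jbl D_x\jbr)}$ is a pseudodifferential operator of \emph{finite} order and the standard composition calculus in $\OPS^m$ already gives $\widetilde A_\omega=\widetilde A+B_1$, $\widetilde B_\omega=\widetilde B+B_2$ with $B_1,B_2\in\OPS^0$. Your invocation of Proposition~\ref{APP:PSEUDO:CALC:CON} is not wrong, but note that the remainder $r_N$ there is only controlled by Proposition~\ref{APP:PSEUDO:CALC:CON:EstR}, which requires the weight-sequence hypothesis~\ref{SH:MR:THEOREM:WEAK:ASSUME:ConEtaK} that is \emph{not} assumed in the strong case (indeed \ref{SH:MR:THEOREM:STRONG:ASSUME:Coeff:Time} allows $K_p$ arbitrary). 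So in your write-up you should either fall back on the finite-order calculus as the paper does, or explain separately why $r_N$ is harmless here. After that, the sharp G{\aa}rding/Gronwall endgame is identical to the paper's.
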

	We note that the spaces $H^{\nu}_{\omega,\,\-\kappa T}$ are imbedded into Sobolev spaces, since we are dealing with strong moduli of continuity, i.e. $\omega(s) = O(\log(s))$. This means, that for $\omega(s) = 1$, i.e. Lipschitz-continuous coefficients, we have no loss of derivatives. For coefficients that are Log-Lipschitz-continuous in time, we have well-posedness in $H^\nu_{\log,\,-\kappa T} = H^{\nu - \kappa T}$ with at most finite loss of derivatives. In between both cases, the loss of derivatives is arbitrarily small.

	\subsection{Result for Weak Moduli of Continuity}\label{SH:MR:THEOREM:WEAK}

	We are able to prove the following well-posedness result if we assume \ref{RESULTS:Both:1}, \ref{RESULTS:Both:2} and \ref{RESULTS:Both:3} as well as the following conditions:
	\begin{enumerate}[label = (SH\arabic*-W), align = left, leftmargin=*]
		\item \label{SH:MR:THEOREM:WEAK:ASSUME:Coeff:Time} The modulus of continuity $\mu$ in \ref{RESULTS:Both:2} is weak.
		\item \label{SH:MR:THEOREM:WEAK:ASSUME:Coeff:Space}
		All the coefficients $a_{m-j,\,\gamma}$ belong to $C\big([0,\,T];\,\B_K^\infty\big)$.
		\item \label{SH:MR:THEOREM:WEAK:ASSUME:DATA} The initial data $g_k$ belongs to $H^{\nu+m-k}_{\eta,\,\delta_1}$ for $k=1,\,\cdots,\,m$.
		\item \label{SH:MR:THEOREM:WEAK:ASSUME:Inhomogeneity} The right-hand side $f=f(t,x) \in C([0,\,T];\,H^\nu_{\eta,\,\delta_2})$.
				
		\item \label{SH:MR:THEOREM:WEAK:ASSUME:ConEtaK} The weight function $\eta$ and the sequence of constants $K_{p}$ satisfy the relation
		\begin{equation*}
			\inf\limits_{p\in\N}\frac{K_p}{\jxi^{p}} \leq C e^{-\delta_0 \eta(\jbl\xi\jbr)}
		\end{equation*}
		for large $|\xi|$ and some $\delta_0 > 0$.
		\item \label{SH:MR:THEOREM:WEAK:ASSUME:Eta} The functions $\eta$ and $\omega$ are smooth and satisfy
		\begin{equation}\label{SH:MR:THEOREM:WEAK:ASSUME:Eta:diff}
		\bigg|\frac{\rmd^k}{\rmd s^k}\eta (s) \bigg| \leq C_k s^{-k} \eta(s),\qquad
		\bigg|\frac{\rmd^k}{\rmd s^k}\omega (s) \bigg| \leq C_k s^{-k} \omega(s),
		\end{equation}
		for all $k \in \N$ and large $s \in \R^+$ and
		\begin{equation}\label{SH:MR:THEOREM:WEAK:ASSUME:Eta:subadd}
		\eta(\jbl \xi + \zeta \jbr) \leq \eta(\jxi) + \eta(\jbl \zeta \jbr),\qquad
		\omega(\jbl \xi + \zeta \jbr)\leq \omega(\jxi) + \omega(\jbl \zeta \jbr)
		\end{equation}
		for all large $\xi,\,\zeta \in \R^n$.
	\end{enumerate}
	\begin{Theorem}\label{SH:MR:THEOREM:WEAK:MainTheorem}
		Consider the \hyperref[SH:MR:THEOREM:WEAK:CauchyProblem]{Cauchy problem \eqref{SH:BM:CauchyProblem}}. Under the above assumptions, there is a unique global (in time) solution
		\begin{equation*}
			u \in \bigcap\limits_{j = 0}^{m-1} C^{m-1-j}\big([0,T];\, H^{\nu+j}_{\eta,\,\delta}\big),
		\end{equation*}
		where $\delta < \min\{\delta_0,\,\delta_1,\,\delta_2\}$, provided that,
		\begin{equation}\label{SH:MR:THEOREM:WEAK:MainTheorem:Equ}
		\mu\left(\frac{1}{\jbl\xi\jbr}\right) \jbl\xi\jbr = \omega(\jbl\xi\jbr) = o(\eta(\jbl\xi\jbr)).
		\end{equation}		
		More specifically, for any $t_0 \in [0,\,T]$, any sufficiently large $\kappa$ and for any $T^\ast \in [0,\,T-t_0]$ such that $\kappa T^\ast$ is sufficiently small, we have the a-priori estimate
		\begin{equation*}
			\begin{aligned}[t]
				& \sum\limits_{j = 0}^{m-1} \big\|\jbl D_x \jbr^{\nu + m-1 -j} e^{ \kappa(T^\ast + t_0 - t) \omega(\jbl D_x \jbr)} \partial_t^j u(t,\,\cdot)\big\|^2_{L^2} \\ & \qquad \leq C \Big(\sum\limits_{k = 1}^{m} \big\|\jbl D_x \jbr^{\nu + m -k} e^{ \kappa (T^\ast+t_0) \omega(\jbl D_x \jbr)} g_k(t_0,\,\cdot)\big\|^2_{L^2}\\
				&\qquad\qquad + \int\limits_{t_0}^t \big\|\jbl D_x \jbr^\nu e^{\kappa(T^\ast + t_0 - z)  \omega(\jbl D_x \jbr)} f(z,\,\cdot)\big\|^2_{L^2} \rmd z\Big),
			\end{aligned}
		\end{equation*}
		for $t_0 \leq t \leq t_0 + T^\ast$.
	\end{Theorem}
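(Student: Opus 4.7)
The plan is to follow the route used in the strong-modulus case but with two nested exponential conjugations: one by $e^{\delta'\eta(\jbl D_{x}\jbr)}$ which converts the $\B_{K}^{\infty}$-regularity of the coefficients in $x$ into membership of the solution in $H^{\nu+j}_{\eta,\delta}$, and one by $e^{\kappa(T^{\ast}+t_{0}-t)\omega(\jbl D_{x}\jbr)}$ which absorbs the infinite loss caused by the weak modulus $\mu$ in $t$. I first reduce~\eqref{SH:BM:CauchyProblem} to a first-order $m\times m$ pseudodifferential system $D_{t}U=\A(t,x,D_{x})U+F$ by setting $U_{j}=\jbl D_{x}\jbr^{m-j}D_{t}^{j-1}u$; the principal-symbol entries of $\A$ then lie in $C^{\mu}([0,T];\Sy^{1})$ and are $\B_{K}^{\infty}$-regular in $x$.

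\medskip

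I then regularize the $t$-dependent coefficients via $a^{\ve}=a\ast_{t}\rho_{\ve}$ with $\ve=\jxi^{-1}$ on the symbol level; standard mollifier bounds combined with the identity $\mu(s)=s\omega(s^{-1})$ yield $|a-a^{\ve}|\leq C\omega(\jxi)\jxi^{-1}$ and $|\partial_{t}a^{\ve}|\leq C\omega(\jxi)$, so that both the regularization error and the $t$-derivative of the regularized symbol land in $\OPS^{0,\omega}$. For the first conjugation I apply Propositions~\ref{APP:PSEUDO:CALC:CON} and~\ref{APP:PSEUDO:CALC:CON:EstR} with $\psi=\eta$ and $\lambda=\delta'<\delta_{0}$: assumption~\ref{SH:MR:THEOREM:WEAK:ASSUME:ConEtaK} is exactly what makes the remainder estimate~\eqref{APP:PSEUDO:CALC:CON:EstR:EstR} applicable, converting the $K_{p}$-growth of the coefficients in $x$ into a controllable lower-order pseudodifferential piece. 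For the second conjugation I diagonalize the principal symbol of the regularized system by a Vandermonde matrix $M\in\Sy^{0}$ built from the characteristic roots $\tau_{k}$ (this is where strict hyperbolicity~\ref{RESULTS:Both:1} enters), producing $\mathrm{diag}(\tau_{1},\ldots,\tau_{m})$ modulo a remainder in $\OPS^{0,\omega}$, and then conjugate with $e^{\kappa(T^{\ast}+t_{0}-t)\omega(\jbl D_{x}\jbr)}$ using Proposition~\ref{APP:PSEUDO:CALC:CON:EstR} again with $\psi=\omega$ and $\lambda=\kappa T^{\ast}$; the constraint $\lambda<\delta_{0}$ of that proposition forces $\kappa T^{\ast}$ small, which is what produces the local-in-time character of the a priori estimate.

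\medskip

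Differentiating the $t$-dependent weight generates the decisive favorable term $-\kappa\omega(\jbl D_{x}\jbr)I$ in the transformed equation. A standard $L^{2}$ energy computation on the transformed unknown $V$, using that the diagonalized principal symbol is purely imaginary and that zero-order skew-symmetric remainders are handled by sharp G\aa rding, leads to a differential inequality schematically of the form
\[
\tfrac{d}{dt}\|V(t)\|_{L^{2}}^{2}\leq(C_{0}-2\kappa c_{0})\bigl\|\omega(\jbl D_{x}\jbr)^{1/2}V(t)\bigr\|_{L^{2}}^{2}+C_{1}\|V(t)\|_{L^{2}}^{2}+\|\widetilde F(t)\|_{L^{2}}^{2}.
\]
Taking $\kappa$ large enough to absorb $C_{0}$ and then applying Gronwall yields the stated a priori bound on $[t_{0},t_{0}+T^{\ast}]$. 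Existence and uniqueness follow by passing $\ve\to 0$ in the regularized problem via a standard compactness argument, and a global-in-time solution on $[0,T]$ is built by iterating the local estimate over finitely many intervals. The inclusion $u\in C^{m-1-j}([0,T];H^{\nu+j}_{\eta,\delta})$ for $\delta<\min\{\delta_{0},\delta_{1},\delta_{2}\}$ is guaranteed by $\omega=o(\eta)$, which yields $e^{\kappa T^{\ast}\omega(\jxi)}\leq C_{\delta}e^{\delta\eta(\jxi)}$ and hence transfers the estimate from the $\omega$-weighted space to $H^{\nu+j}_{\eta,\delta}$.

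\medskip

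The principal difficulty will lie in the bookkeeping of the two nested conjugations: I must verify that the remainders produced by Proposition~\ref{APP:PSEUDO:CALC:CON:EstR}, propagated through both weights and through the diagonalizer, retain amplitude at most $\omega(\jxi)$ so that they can be absorbed by the $-\kappa\omega$ gain. This is precisely where the sub-additivity~\eqref{SH:MR:THEOREM:WEAK:ASSUME:Eta:subadd} of $\eta$ and $\omega$, the hierarchy $\omega=o(\eta)$ from~\eqref{SH:MR:THEOREM:WEAK:MainTheorem:Equ}, and the parameter separation $\delta'+\kappa T^{\ast}<\delta_{0}$ must cooperate. Once the symbolic framework is set up, the remaining ingredients---hyperbolic diagonalization, sharp G\aa rding for the zero-order remainders, and the time-gluing argument---are routine.
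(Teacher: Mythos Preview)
Your outline has a genuine gap at the core of the energy argument. When you conjugate the first-order principal part (a matrix with entries in $\Sy^{1}$) by the \emph{time-independent} weight $e^{\delta'\eta(\jbl D_{x}\jbr)}$, Propositions~\ref{APP:PSEUDO:CALC:CON} and~\ref{APP:PSEUDO:CALC:CON:EstR} give a leading remainder of the form $\sum_{|\gamma|=1}D_{x}^{\gamma}(\text{principal symbol})\cdot\chi_{\gamma}(\xi)$, with $|\chi_{\gamma}(\xi)|\leq C\delta'\eta(\jxi)\jxi^{-1}$; since the principal entries are in $\Sy^{1}$, this remainder lies in $\Sy^{0,\eta}$, i.e.\ it has amplitude of order $\eta(\jxi)$, not $\omega(\jxi)$. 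Moreover, once you multiply by $\I$ and diagonalize to the real roots $\tau_{k}$, this first-order remainder becomes $\sum_{j}\partial_{x_{j}}\tau_{k}\cdot\delta'\eta'(\jxi)\xi_{j}\jxi^{-1}$, which is \emph{real} with indefinite sign. In the energy inequality it therefore produces a term of size $C\delta'\|\eta(\jbl D_{x}\jbr)^{1/2}V\|_{L^{2}}^{2}$, and because $\omega=o(\eta)$ the favourable term $-\kappa\omega(\jbl D_{x}\jbr)$ from the second conjugation can never absorb it, no matter how large $\kappa$ is. Your schematic inequality with coefficient $(C_{0}-2\kappa c_{0})$ in front of $\|\omega^{1/2}V\|^{2}$ is thus incorrect: the actual bad term involves $\eta^{1/2}$, not $\omega^{1/2}$. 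The conditions you list (sub-additivity, $\omega=o(\eta)$, $\delta'+\kappa T^{\ast}<\delta_{0}$) are only what is needed to \emph{apply} Proposition~\ref{APP:PSEUDO:CALC:CON:EstR}; they do nothing to shrink the amplitude of its output from $\eta$ to $\omega$.

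The paper avoids this by reversing the logical order. It first performs \emph{only} the time-dependent $\omega$-conjugation $e^{\kappa(T^{\ast}-t)\omega(\jbl D_{x}\jbr)}$; then Proposition~\ref{APP:PSEUDO:CALC:CON:EstR} with $\psi=\omega$ produces remainders in $\Sy^{0,\omega}$, which \emph{are} absorbed by the $-\kappa\omega$ gain, yielding the stated local a~priori estimate. Membership of the solution in $H^{\nu+j}_{\eta,\delta}$ is obtained in a separate step: one applies $e^{\delta^{\ast}\eta}$ to the already-transformed system, and now the role of $\omega=o(\eta)$ is the \emph{useful} direction---the $\omega$-sized terms $\check B$ are dominated by an $\eta$-mechanism, and the iteration $\delta^{\ast}>\delta^{\ast\ast}>\cdots$ over subintervals is precisely what pays for the $\eta$-sized conjugation remainders. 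Your argument that $e^{\kappa T^{\ast}\omega}\leq C_{\delta}e^{\delta\eta}$ ``transfers the estimate to $H^{\nu+j}_{\eta,\delta}$'' is also backwards: it shows $H_{\eta,\delta}\subset H_{\omega,\kappa T^{\ast}}$, so it controls the right-hand side of the estimate from the data, but it cannot upgrade the solution from the $\omega$-weighted space to the strictly smaller $\eta$-weighted space. Finally, note that the mollification $\ve=\jxi^{-1}$ is a fixed symbol-level choice, not a parameter to be sent to zero; existence follows from the a~priori estimate by duality, not by compactness in $\ve$.
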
	
	We note that the spaces $H^{\nu+j}_{\eta,\,\delta}$ are spaces of ultra-differentiable functions, since we are dealing with weak moduli of continuity, i.e. $\omega(s) = o(\log(s))$. In general, the loss of derivatives that occurs is infinite.	
		
	\subsection{Examples and Remarks}
		
		Let us begin with some examples of strong moduli of continuity.
		
		\begin{Example}[Lipschitz-coefficients]\label{SH:MR:EXAMPLES:Lip}
			Let the coefficients be Lipschitz continuous in time, i.e. $\mu(s) = s$ and $\omega(s) = 1$, and $\B^\infty$ in space.
			The initial data $g_k$ and right-hand side $f$ are chosen such that
			\begin{equation*}
				g_k \in H^{\nu+m-k},\, k = 1,\,\ldots,\,m,\quad f \in C\big([0,\,T];\,H^\nu\big).
			\end{equation*}
			All assumptions are satisfied and we a have global (in time) solution
			\begin{equation*}
			u = u(t,\,x) \in \bigcap\limits_{j = 0}^{m-1} C^{m-1-j}\big([0,T];\, H^{\nu+j}\big),
			\end{equation*} i.e. we have Sobolev-well-posedness without a loss of derivatives. This result is already well-known (see e.g. \cite{Agliardi.2004, Cicognani.1999, Cicognani.2006}, \cite[Chapter 9]{Hormander.1963} and \cite[Chapter 6]{Mizohata.1973}).
		\end{Example}
		
		\begin{Example}[Log-Lip-coefficients]\label{SH:MR:EXAMPLES:LogLip}
			Let the coefficients be Log-Lip-continuous in time, i.e. $\mu(s) = s\big(\log(s^{-1}) +1\big)$ and $\omega(s) = \log(s) +1$, and  $\B^\infty$ in space.
			The initial data $g_k$ and right-hand side $f$ are chosen such that
			\begin{equation*}
				g_k \in H^{\nu+m-k},\, k = 1,\,\ldots,\,m,\quad f \in C\big([0,\,T];\,H^{\nu}\big).
			\end{equation*}
			In this case, we have a global (in time) solution
			\begin{equation*}
			u = u(t,\,x) \in \bigcap\limits_{j = 0}^{m-1} C^{m-1-j}\big([0,T];\,H^{\nu+j-\kappa T}\big),
			\end{equation*} where $\kappa > 0$ is sufficiently large.  We conclude that we have an at most finite loss of derivatives, which is also a well-known result (see e.g. \cite{Agliardi.2004, Cicognani.1999, Cicognani.2006, Colombini.1995}).
		\end{Example}

		Next, we consider weak moduli of continuity. Looking at Theorem~\ref{SH:MR:THEOREM:WEAK:MainTheorem}, assumptions \ref{SH:MR:THEOREM:WEAK:ASSUME:ConEtaK} and \ref{SH:MR:THEOREM:WEAK:ASSUME:Eta} may not be as intuitive as the other ones.
		
		Assumption \ref{SH:MR:THEOREM:WEAK:ASSUME:ConEtaK} describes the connection between the weight function $\eta$ of the solution space and the behavior of the coefficients with respect to the spatial variables. In a way, we may interpret this condition as a multiplication condition in the sense that the regularity of the coefficients in $x$ has to be such that the product of coefficients and the solution stays in the solution space. This means that the weight sequence $K_p$ and the weight function $\eta$ have to be compatible in a certain sense. One way to ensure that they are compatible is to choose them such that the function space of all functions $f \in C^\infty(\R^n)$ with
		\begin{equation*}
			\sup\limits_{x\,\in \R^n}|D_x^\alpha f(x)| \leq C K_{|\alpha|},
		\end{equation*}
		and the function space of all functions $f \in L^2(\R^n)$ with
		\begin{equation*}
			e^{\eta(\langle D_x \rangle)} f \in L^2(\R^n)
		\end{equation*}
		coincide. For results concerning the conditions on $\eta$ and $K_p$ under which both spaces coincide, we refer the reader to \cite{Bonet.2007, Pascu.2010, Reich.2016}.
		
		Assumption \ref{SH:MR:THEOREM:WEAK:ASSUME:Eta} provides some relations that are used in the pseudodifferential calculus. Condition \eqref{SH:MR:THEOREM:WEAK:ASSUME:Eta:diff} for $\eta$ and $\omega$ is not really a restriction. If $\eta$ or $\omega$ happen to be not smooth, we can define equivalent weight functions, that are smooth and satisfy \eqref{SH:MR:THEOREM:WEAK:ASSUME:Eta:diff}.
		
		The difficulty of checking whether condition \eqref{SH:MR:THEOREM:WEAK:ASSUME:Eta:subadd}, is satisfied, certainly depends on the choice of $\eta$ and $\omega$. However, in some cases it may be easier to verify that $\eta$  and $\omega$ belong to a certain class of weights, for which \eqref{SH:MR:THEOREM:WEAK:ASSUME:Eta:subadd} is satisfied. An example of such a class is introduced in Definition~3.7 in \cite{Reich.2016}.
		
		In the following, we compute some examples for weak moduli of continuity. In each example, we first choose a certain modulus of continuity $\mu$ to describe the regularity of the coefficients in time. Depending on this modulus of continuity, we look for a suitable weight function $\eta$ which satisfies \eqref{SH:MR:THEOREM:WEAK:MainTheorem:Equ}.
		Having chosen $\eta$, we specify the regularity of the coefficients in space by choosing a sequence of constants $K_p$ such that \ref{SH:MR:THEOREM:WEAK:ASSUME:ConEtaK} is satisfied.
		
		\begin{Example}[Log-Log$^{[m]}$-Lip coefficients]\label{SH:MR:EXAMPLES:LogLogmLip}
			Let the coefficients be Log-Log$^{[m]}$-Lip continuous in time, i.e. $\mu(s) =  s \left(\log\left(\frac{1}{s}\right) + 1\right)\log^{[m]}\left(\frac{1}{s}\right)$ and $\omega(s) = (\log(s) + 1)\log^{[m]}(s)$, $m \geq 2$. We choose
			\begin{equation*}
				\eta(s) =  \log(s)\big(\log^{[m]}(s)\big)^{1+\ve} + c_m,
			\end{equation*}
			where $\ve >0$ is arbitrarily small and $c_m>0$ is such that $\eta(s) \geq 1$ for all $s \geq 1$.
			Finding a suitable weight sequence $K_p$ is not obvious. In the literature on weight sequences and weight functions (e.g. \cite{Bonet.2007, Komatsu.1977, Pilipovich.2015, Pilipovich.2016}), for a given weight function $M(t)$ one can compute the associated weight sequence $\{M_p\}_p$ by considering
			\begin{equation*}
				M_p = \sup\limits_{t > 0} \frac{t^p}{e^{M(t)}}.
			\end{equation*}
			Following this approach for our weight function $\eta$ runs into the difficulty of actually computing the above supremum. However, it is possible to compute the associated sequence for $\omega$, which is $M_{p,\,\omega} = (\exp^{[m]}(p))^{(p-1)}e^{-p}$.
			From this, we can get a possible sequence $K_p$ by decreasing the growth of $M_{p,\,\omega}$. Setting $K_p = (\exp^{[m]}(p))^{(p-1)}e^{-p-\widetilde \ve}$, for some $\widetilde \ve > 0$, yields a possible weight sequence for $\eta$, in the sense that, for fixed $\widetilde \ve > 0$ there is a $\ve > 0$ such that \ref{SH:MR:THEOREM:WEAK:ASSUME:ConEtaK} is satisfied. However it is not clear how $\widetilde \ve$ is related to $\ve$ in general. Furthermore, it is not clear whether the set of functions defined by $K_p$ forms a proper function space.			
			
			Another way to obtain a possible weight sequence $K_p$ (which actually defines a function space) is to guess a weight sequence $\{M_p\}_p$ and to compute the associated weight function $M(t)$ by considering
			\begin{equation*}
				M(t) = \sup\limits_{p \in N} \log\Big(\frac{|t|^p}{M_p}\Big)\; \text{ if } t \neq 0 \text{ and } M(0) = 0.
			\end{equation*}
			Then we compare $M(t)$ and $\eta$. If $M(t)$ grows faster than $\eta$, then $M_p$ is a potential candidate for $K_p$. Of course, following this procedure, we cannot ensure that the sequence $K_p$ is optimal.
			
			In our case, we consider the sequence $\{M_p\}_p = \{p^{p^2}\}_p$, which defines a space of ultradifferentiable functions (see \cite{Pilipovich.2015, Pilipovich.2016}).
			We compute the associated function $M(\xi)$ and obtain
			\begin{equation*}
				M(\xi) = \frac{\log(\jxi)}{2} e^{W\big(\frac{\log(\jxi) \sqrt{e}}{2}\big) -\frac{1}{2}},
			\end{equation*}
			where $W$ denotes the Lambert $W$ function (also called product logarithm).
			For sufficiently large $\xi$ we find that $M(\xi) < \eta(\jxi)$ and, therefore, we obtain the inequality
			\begin{equation*}
				\inf\limits_{p \in \N} \frac{p^{p^2}}{\jxi^p} \leq C e^{-  M(\xi)} \leq C e^{-\delta_0 \eta(\jxi)},
			\end{equation*}
			for some $\delta_0 > 0$.
			
			Either way, condition \ref{SH:MR:THEOREM:WEAK:ASSUME:ConEtaK} is satisfied if the coefficients $a_{m-j,\,\gamma}$ are Log-Log$^{[m]}$-Lip-continuous in time, belong to $\B^\infty$ in space and satisfy
			\begin{equation*}
				|D_x^\beta a_{m-j,\,\gamma}(t,\,x)| \leq C K_{|\beta|},
			\end{equation*}
			uniformly in $x$, for fixed $t$.
			
			The initial data $g_k$  and right-hand side $f$ are chosen such that
			\begin{equation*}
				g_k \in H^{\nu+m-k}_{\eta,\, \delta_1},\, k = 1,\,\ldots,\,m,\quad f \in C\big([0,\,T];\,H^{\nu}_{\eta,\,\delta_2}\big).
			\end{equation*}
			
			Lastly, we check that assumption \ref{SH:MR:THEOREM:WEAK:ASSUME:Eta} is satisfied. Clearly,
			\begin{equation*}
				\Big|\frac{\rmd^k}{\rmd s^k}\eta (s) \Big| \leq C_k s^{-k} \eta(s),\qquad
				\Big|\frac{\rmd^k}{\rmd s^k}\omega (s) \Big| \leq C_k s^{-k} \omega(s)
			\end{equation*}
			for all $k \in \N$ and large $s \in \R$.
			
			Proving
			\begin{equation}\label{SH:MR:EXAMPLES:LogLogmLip:Assume:Eta}
			\eta(\jbl \xi + \zeta \jbr) \leq \eta(\jxi) + \eta(\jbl \zeta \jbr),\qquad
			\omega\jbl \xi + \zeta \jbr)\leq \omega(\jxi) + \omega(\jbl \zeta \jbr),
			\end{equation}
			for large $\jxi$ and $\jbl \zeta \jbr$, is not so obvious.
			We use that $\eta$ and $\omega$ belong to the set $\W(\R)$ which was introduced by Reich~\cite{Reich.2016}. In \cite{Reich.2016} the author proves that all functions in  $\W(\R)$ satisfy an even stronger condition than \eqref{SH:MR:EXAMPLES:LogLogmLip:Assume:Eta}.		
		
			Thus, all assumptions are satisfied and we have a global (in time) solution $u = u(t,\,x) \in \bigcap\limits_{j = 0}^{m-1} C^{m-1-j}\big([0,T];\,H^{\nu+j}_{\eta,\,\delta}\big)$ with $\delta < \min\{\delta_0,\,\delta_1,\,\delta_2\}$. We expect an infinite loss of derivatives since $\log(s) = o(\eta(s))$.
		\end{Example}
		
		\begin{Example}[H{\"o}lder-coefficients]\label{SH:MR:EXAMPLES:Hoelder}
			Let the coefficients be H{\"o}lder-continuous in time, i.e. $\mu(s) =  s^\alpha$ and $\omega(s) = s^{1-\alpha}$, $\alpha\in(0,\,1)$. We choose
			\begin{equation*}
				\eta(s) =  s^\kappa,
			\end{equation*}
			where $\kappa > 1- \alpha$ is a constant.
			We use the well-known inequality
			\begin{equation*}
				\inf\limits_{p \in \N} (p!)^\frac{1}{\kappa} (A \jxi^{-1})^p \leq C e^{-\delta_0 \eta(\jxi)},
			\end{equation*}
			where $A$ is a positive constant. This yields that condition \ref{SH:MR:THEOREM:WEAK:ASSUME:ConEtaK} is satisfied if the coefficients $a_{m-j,\,\gamma}$ are H{\"o}lder-continuous in time and belong to the $\B^\infty$ in space and satisfy
			\begin{equation*}
				|D_x^\beta a_{m-j,\,\gamma}(t,\,x)| \leq C (|\beta|!)^{\frac{1}{\kappa}} A^{|\beta|},
			\end{equation*}
		uniformly in $x$, for fixed $t$. This defines the Gevrey space $G^{\frac{1}{\kappa}}(\R^n)$. The initial data $g_k$  and right-hand side $f$ are chosen such that
			\begin{equation*}
				g_k  \in H^{\nu+m-k}_{\eta,\, \delta_1},\, k = 1,\,\ldots,\,m,\quad f \in C\big([0,\,T];\,H^{\nu}_{\eta,\,\delta_2}\big).
			\end{equation*}
			Lastly, we check that assumption \ref{SH:MR:THEOREM:WEAK:ASSUME:Eta} is satisfied. Clearly,
			\begin{equation*}
				\Big|\frac{\rmd^k}{\rmd s^k}\eta (s) \Big| \leq C_k s^{-k} \eta(s),\quad
				\Big|\frac{\rmd^k}{\rmd s^k}\omega (s) \Big| \leq C_k s^{-k} \omega(s)
			\end{equation*}
			for all $k \in \N$ and large $s \in \R$. Moreover,
			\begin{equation*}
				\omega(\jbl \xi + \zeta \jbr)\leq \omega(\jxi) + \omega(\jbl \zeta \jbr),\quad \text{ and }\quad \eta(\jbl \xi + \zeta \jbr)\leq \eta(\jxi) + \eta(\jbl \zeta \jbr)
			\end{equation*}
			for sufficiently large $\jxi$ and $\jbl \zeta \jbr$.
			
			Thus, all assumptions are satisfied and we have a global (in time) solution $u = u(t,\,x) \in \bigcap\limits_{j = 0}^{m-1} C^{m-1-j}\big([0,T];\,H^{\nu+j}_{\eta,\,\delta}\big)$ with $\delta < \min\{\delta_0,\,\delta_1,\,\delta_2\}$, where the loss of derivatives is infinite, since $\log(s) = o(\eta(s))$.
			
			We remark, that $H^\nu_{\eta,\,\delta}$ is the classical Gevrey space $G^{\frac{1}{\kappa}}$, which means we have Gevrey-well-posedness (with infinite loss of derivatives), if
			$
			\kappa > 1 - \alpha
			$,
			which is a well-known result (cf. e.g. \cite{Agliardi.2004b,Cicognani.1999,Jannelli.1985, Nishitani.1983}).
		\end{Example}
		
		\begin{Example}[Log$^{-\alpha}$-coefficients]\label{SH:MR:EXAMPLES:Log-alpha}
			Let the coefficients be Log$^{-\alpha}$-continuous in time, i.e. $\mu(s) = \left(\log\left(\frac{1}{s}\right) + 1 \right)^{-\alpha}$ and $\omega(s) = s \left(\log(s) + 1 \right)^{-\alpha}$, $\alpha\in(0,\,1)$. We choose
			\begin{equation*}
				\eta(s) =  s \left(\log(s) + 1 \right)^{-\kappa},
			\end{equation*}
			where $0 < \kappa < \alpha$ is a constant.
			In view of Definition~9 and Example~25 in \cite{Bonet.2007}, we find that condition \ref{SH:MR:THEOREM:WEAK:ASSUME:ConEtaK}	is satisfied if we choose
			\begin{equation*}
				K_p = ((p+1)(\log(e+p)))^p.
			\end{equation*}
			The initial data $g_k$  and right-hand side $f$ are chosen such that
			\begin{equation*}
				g_k \in H^{\nu+m-k}_{\eta,\, \delta_1},\, k = 1,\,\ldots,\,m,\quad f \in C\big([0,\,T];\,H^{\nu}_{\eta,\,\delta_2}\big).
			\end{equation*}
			Lastly, we check that assumption \ref{SH:MR:THEOREM:WEAK:ASSUME:Eta} is satisfied. Clearly,
			\begin{equation*}
				\Big|\frac{\rmd^k}{\rmd s^k}\eta (s) \Big| \leq C_k s^{-k} \eta(s),\qquad
				\Big|\frac{\rmd^k}{\rmd s^k}\omega (s) \Big| \leq C_k s^{-k} \omega(s)
			\end{equation*}
			for all $k \in \N$ and large $s \in \R$. The relations
			\begin{equation*}
				\omega(\jbl \xi + \zeta \jbr)\leq \omega(\jxi) + \omega(\jbl \zeta \jbr),\quad \text{ and }\quad \eta(\jbl \xi + \zeta \jbr)\leq \eta(\jxi) + \eta(\jbl \zeta \jbr)
			\end{equation*}
			for sufficiently large $\jxi$ and $\jbl \zeta \jbr$, can be proved by using a similar approach to the one used in Example~\ref{SH:MR:EXAMPLES:LogLogmLip}.
			
			Thus, all assumptions are satisfied and we  have a global (in time) solution $u = u(t,\,x) \in \bigcap\limits_{j = 0}^{m-1} C^{m-1-j}\big([0,T];\,H^{\nu+j}_{\eta,\,\delta}\big)$ with $\delta < \min\{\delta_0,\,\delta_1,\,\delta_2\}$. We expect an infinite loss of derivatives since $\log(s) = o(\eta(s))$.
		\end{Example}	
	
	\section{Proofs}\label{PROOF}
	
		In this section, we prove Theorem~\ref{SH:MR:THEOREM:STRONG:MainTheorem} and Theorem~\ref{SH:MR:THEOREM:WEAK:MainTheorem}.
		Both proofs follow the steps described below and are, in fact, identical until we perform the change of variables. In the case of strong moduli of continuity, this change of variables features pseudodifferential operators of finite order; for weak moduli of continuity these pseudodifferential operators are of infinite order.
	
		The first step of both proofs is to introduce regularized characteristic roots $\lambda_j$ which are smooth in time. We continue by rewriting the original differential equation using the newly defined regularized roots and transform the differential equation into a system of order one with respect to the derivatives in time. After the diagonalization of the principal part, we perform a change of variables (containing the loss of derivatives). Finally, we able to apply sharp G{\aa}rding's inequality and Gronwall's lemma to derive a $H^\nu$-$H^\nu$-estimate, this implies $L^2$ well-posedness for the auxiliary Cauchy problem.

	\subsection{Regularize Characteristic Roots}\label{SH:PROOF:ROOTS}
	
	The characteristic roots of the operator in \eqref{SH:MR:THEOREM:WEAK:CauchyProblem} are the solutions $\tau_1,\,\ldots,\,\tau_m$ of characteristic equation
	\begin{equation*}
		\tau^m - \sum\limits_{j=0}^{m-1}\sum\limits_{|\gamma| + j  = m} a_{m-j,\,\gamma}(t,\,x) \xi^\gamma \tau^j = 0.
	\end{equation*}

	They are homogeneous of degree $1$ in $\xi$ and
	\begin{equation*}
		\big|\partial_\xi^\alpha D_x^\beta \tau_j(t,\,x,\,\xi) - \partial_\xi^\alpha D_x^\beta \tau_j(s,\,x,\,\xi)\big| \leq C K_{|\beta|} \mu(|t-s|) \jxi^{1-|\alpha|},
	\end{equation*}
	due to the assumptions \ref{RESULTS:Both:1} and \ref{RESULTS:Both:2}.
	Since the characteristic roots are only $\mu$-continuous in time, it is useful to approximate them by regularized roots which are smooth in time.
	\begin{Definition}\label{SH:PROOF:ROOTS:DEF:Roots}
		Let $\vp \in \Czi(\R)$ be a given function satisfying $\int_\R \vp(x) \rmd x = 1$ and $\vp(x)~\geq~0$ for any $x \in \R$  with $\supp \vp \subset [{-1},1]$. Let $\ve > 0$ and set $\vp_\ve(x) = \frac{1}{\ve} \vp\left(\frac{x}{\ve}\right)$.
		Then we define for $j = 1,\,\ldots,\, m$,
		\begin{equation*}
			\lambda_j(t,\,x,\,\xi) := (\tau_j(\cdot,\,x,\,\xi) \ast \vp_\ve(\cdot))(t,\,x,\,\xi).
		\end{equation*}
	\end{Definition}
	
	\begin{Remark}\label{SH:PROOF:ROOTS:Remark:ExtendTau}
		The characteristic roots $\tau_j(t,\,x,\,\xi)$ are defined on $[0,\,T]\times\R^n\times\R^n$. For the above definition to be sensible, it is necessary to  extend them to $[-\ve,\,T+\ve]\times\R^n\times\R^n$, continuously. However, we do not distinguish between $\tau_j(t,\,x,\,\xi)$ and their continuation but just write $\tau_j(t,\,x,\,\xi)$.
	\end{Remark}
	The following relations are obtained by straightforward computations.
	\begin{Proposition}\label{SH:PROOF:ROOTS:Lemma:RegularRoots}
		For $\ve = \jxi^{-1}$ we have $\lambda_j \in C\big([0,\,T];\,\Sy^1\big)$. There exist constants $C_\alpha > 0$ such that the inequalities
		\begin{enumerate}[label = (\roman*),align = left, leftmargin=*]
			\item $\big|\partial_\xi^\alpha D_x^\beta \partial_t^k \lambda_j(t,\,x,\,\xi)\big| \leq C_\alpha K_{|\beta|} \jxi^{k+1-|\alpha|} \mu(\jxi^{-1})$, for all $k \geq 1$, $j = 1,\,\ldots,\,m$,
			\item $\big|\partial_\xi^\alpha D_x^\beta\big(\lambda_j(t,\,x,\,\xi) - \tau_j(t,\,x,\,\xi)\big)\big| \leq C_\alpha K_{|\beta|} \jbl\xi\jbr^{1-|\alpha|} \mu(\jxi^{-1}) $, for all $j = 1,\,\ldots,\,m$, and
			\item $\lambda_j(t,\,x,\,\xi) - \lambda_i(t,\,x,\,\xi) \geq C \jbl\xi\jbr$, for all $1 \leq i < j \leq m$,
		\end{enumerate}
		are satisfied for all $t \in [0,\,T]$ and $x,\,\xi \in \R^n$.
	\end{Proposition}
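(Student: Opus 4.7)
The plan is to exploit the standard structural properties of the Friedrichs mollifier: $\int_\R \vp_\ve = 1$ and, crucially, $\int_\R \partial_s^k \vp_\ve(s)\,\rmd s = 0$ for every $k\geq 1$. I will prove (ii) first, since it drives everything else. Writing
\[
\lambda_j(t,x,\xi) - \tau_j(t,x,\xi) = \int_\R \big[\tau_j(t-s,x,\xi) - \tau_j(t,x,\xi)\big]\vp_\ve(s)\,\rmd s,
\]
the integrand is supported in $|s|\leq \ve = \jxi^{-1}$, so by the assumption on the roots stated just before Definition~\ref{SH:PROOF:ROOTS:DEF:Roots} we get $|\tau_j(t-s,x,\xi) - \tau_j(t,x,\xi)| \leq C K_0 \mu(\ve) \jxi$. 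Applying $\partial_\xi^\alpha D_x^\beta$ under the integral uses the same $\mu$-estimate with weight $K_{|\beta|}\jxi^{1-|\alpha|}$; the only subtlety is that $\ve$ depends on $\xi$, so the $\xi$-derivatives that fall on $\vp_\ve$ must be handled by writing $\lambda_j = \tilde\lambda_j(t,x,\xi,\ve)\big|_{\ve = \jxi^{-1}}$, differentiating $\tilde\lambda_j$ in $\ve$ via the same zero-mean trick (since $\partial_\ve \vp_\ve$ again has zero integral), and using $|\partial_\xi^\alpha \jxi^{-1}| \leq C_\alpha \jxi^{-1-|\alpha|}$ together with the monotonicity of $\mu$. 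This yields (ii).

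For (i), I differentiate the convolution $k$ times in $t$, moving all $t$-derivatives onto $\vp_\ve$:
\[
\partial_t^k \lambda_j(t,x,\xi) = \int_\R \tau_j(t-s,x,\xi)\,\partial_s^k \vp_\ve(s)\,\rmd s.
\]
Since $\int \partial_s^k \vp_\ve \,\rmd s = 0$ for $k\geq 1$, I can subtract $\tau_j(t,x,\xi)$ freely and estimate by $\int |\tau_j(t-s)-\tau_j(t)||\partial_s^k \vp_\ve(s)|\rmd s \leq C\mu(\ve)\jxi \cdot \ve^{-k}$, which is exactly $C\jxi^{k+1}\mu(\jxi^{-1})$. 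The $\partial_\xi^\alpha D_x^\beta$ derivatives are appended as in the argument for (ii), producing the weights $K_{|\beta|}\jxi^{-|\alpha|}$; here one again tracks carefully the contributions generated by $\xi$-derivatives hitting the $\xi$-dependent width $\ve$.

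For (iii), I use strict hyperbolicity, assumption \ref{RESULTS:Both:1}: the characteristic roots are simple, homogeneous of degree $1$ and continuous on the compact set $[0,T]\times \R^n \times S^{n-1}$ after trivial extension in $t$, so there exists $c_0>0$ with $\tau_j(t,x,\xi) - \tau_i(t,x,\xi) \geq c_0 |\xi|$ for all $i<j$ and all $\xi\neq 0$. Combined with (ii), which gives $|\lambda_k - \tau_k| \leq C\mu(\jxi^{-1})\jxi = o(\jxi)$ as $|\xi|\to\infty$, one obtains $\lambda_j - \lambda_i \geq \tfrac{c_0}{2}\jxi$ for $|\xi|$ sufficiently large; a standard modification of $\lambda_j$ on a bounded set in $\xi$ (which adds a symbol in $S^{-\infty}$ and does not affect (i) and (ii)) ensures the stated lower bound for all $\xi$. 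Finally, $\lambda_j \in C([0,T];\Sy^1)$ is immediate from the case $k=0,\,\alpha,\beta$ arbitrary in the argument for (i)–(ii) together with the continuity of the convolution in $t$. The only genuine care needed is the bookkeeping for $\xi$-derivatives hitting $\ve(\xi)$; the rest is a direct application of the zero-mean-of-derivative-of-mollifier trick.
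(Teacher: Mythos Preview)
Your plan is correct and is exactly what the paper means by ``obtained by straightforward computations'' (no detailed proof is given there): the zero-mean-of-mollifier-derivative trick for (i), the subtraction $\int[\tau_j(t-s)-\tau_j(t)]\vp_\ve(s)\,\rmd s$ for (ii), the chain-rule bookkeeping for $\partial_\xi$ hitting $\ve=\jxi^{-1}$, and the absorption of $|\lambda_k-\tau_k|=o(\jxi)$ into the root gap for (iii) are all the standard moves.

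One genuine slip to fix in (iii): the set $[0,T]\times\R^n\times S^{n-1}$ is \emph{not} compact, so your compactness argument for the uniform bound $\tau_j-\tau_i\geq c_0|\xi|$ does not go through as written. You should either read assumption~\ref{RESULTS:Both:1} as uniform strict hyperbolicity (this is the customary interpretation in this literature), or argue that since the coefficients $a_{m-j,\gamma}\in C([0,T];\B^\infty)$ take values in a bounded set, the characteristic polynomials lie in a compact family with simple roots, whence the root gap is uniformly bounded below by continuity of roots in the coefficients. Either way the remainder of your argument for (iii) then goes through unchanged.
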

	\begin{Remark}\label{SH:PROOF:ROOTS:Remark:RegularRoots}
		We observe that $(i)$ and $(ii)$ in Proposition~\ref{SH:PROOF:ROOTS:Lemma:RegularRoots} are equivalent to
		\begin{enumerate}[label = (\roman*),align = left, leftmargin=*]
			\item $\big| \partial_\xi^\alpha D_x^\beta \partial_t^k \lambda_j(t,\,x,\,\xi)\big| \leq C_\alpha K_{|\beta|} \jxi^{k-|\alpha|} \omega(\jxi)$ and
			\item $\big|\partial_\xi^\alpha D_x^\beta\big(\lambda_j(t,\,x,\,\xi) - \tau_j(t,\,x,\,\xi)\big)\big| \leq C_\alpha K_{|\beta|} \jbl\xi\jbr^{-|\alpha|} \omega(\jxi)$.
		\end{enumerate}
	\end{Remark}
	
	\subsection{Factorization and Reduction to a Pseudodifferential System of First Order}\label{SH:PROOF:FAC}

	We are interested in a factorization of the operator $P(t,\,x,\,D_t,\,D_x)$. Formally, this leads to
	\begin{equation}\label{SH:PROOF:FAC:OpTau}
	\begin{aligned}
		P(t,\,x,\,D_t,\,D_x) = (D_t - \tau_m(t,\,x,\,D_x)) \cdots (D_t - \tau_1(t,\,x,\,D_x))\\+ \sum\limits_{j=0}^{m-1} R_j(t,\,x,\,D_x) D_t^j,
	\end{aligned}
	\end{equation}
	where the difficulty is that the operators $\tau_k(t,\,x,\,D_x)$ are not differentiable with respect to $t$, which means that the composition $D_t \circ \tau_k(t,\,x,\,D_x)$ may not be well-defined. An idea to overcome this difficulty is to use the regularized roots $\lambda_k(t,\,x,\,D_x)$ in \eqref{SH:PROOF:FAC:OpTau} instead of $\tau_k(t,\,x,\,D_x)$. This idea, however, comes at the price of increasing the order of the lower order terms. How much their order is increased depends on the terms $\big|\partial_\xi^\alpha D_x^\beta\big(\lambda_j(t,\,x,\,\xi) - \tau_j(t,\,x,\,\xi)\big)\big|$. In view of Proposition~\ref{SH:PROOF:ROOTS:Lemma:RegularRoots} and Remark~\ref{SH:PROOF:ROOTS:Remark:RegularRoots} we define
	the operator
	\begin{equation*}
		\widetilde P(t,\,x,\,D_t,\,D_x) = (D_t - \lambda_m(t,\,x,\,D_x) \circ \ldots \circ (D_t - \lambda_1(t,\,x,\,D_x)),
	\end{equation*}
	and observe that $\widetilde P(t,\,x,\,D_t,\,D_x)$ is a factorization of $P(t,\,x,\,D_t,\,D_x)$ in the sense that
	\begin{equation}\label{SH:PROOF:FAC:OpLambda}
	P(t,\,x,\,D_t,\,D_x) = \widetilde P(t,\,x,\,D_t,\,D_x) + \sum\limits_{j=0}^{m-1} R_j(t,\,x,\,D_x)D_t^j,
	\end{equation}
	where
	$R_j(t,\,x,\,D_x) \in C\big([0,\,T];\,\OPS^{m-1-j,\,\omega}\big)$, $j = 1,\,\ldots,\,m-1$.
	
	Our next step is to transform the differential equation $Pu = f$ into a pseudodifferential system of first order. For this purpose, we consider $P = P(t,\,x,\,D_t,\,D_x)$ as given in \eqref{SH:PROOF:FAC:OpLambda} and introduce the change of variables $U = U(t,\,x) = (u_0(t,\,x),\,\ldots,\,u_{m-1}(t,\,x))^T$, where
	\begin{align*}
		u_0(t,\,x) &= \jbl D_x \jbr^{m-1} v_0(t,\,x), && u_j(t,\,x) = \jbl D_x\jbr^{m-1-j}v_j,\\
		v_0(t,\,x) &= u(t,\,x),  &&v_j(t,\,x) = (D_t - \lambda_j(t,\,x,\,D_x))v_{j-1}(t,\,x),
	\end{align*}
	for $j = 1,\,\ldots,\,m-1$. By including the terms $D_t - \lambda_j(t,\,x,\,D_x)$ (and not just $D_t$) in the change of variables (i.e. also in the energy), the principal part of the operator of the resulting system is already almost diagonal. More precisely, this means that
	$Pu = f$
	is equivalent to
	\begin{equation}\label{SH:PROOF:FAC:System}
	D_t U(t,\,x) -
	A(t,\,x,\,D_x)
	U(t,\,x) + B(t,\,x,\,D_x)U(t,\,x) =
	(0, \,\ldots\, 0,\,	 f(t,\,x))^T,
	\end{equation}
	where
	\begin{equation*}
		A(t,\,x,\,D_x)  = \begin{pmatrix}
			\lambda_1(t,\,x,\,D_x) & \jbl D_x \jbr & 0	& \ldots & 0	\\
			0&\ddots	& \ddots & &\vdots\\
			\vdots&&\ddots	& \ddots &0\\
			\vdots& & &\ddots&\jbl D_x \jbr\\
			0 &\ldots&\ldots&0&\lambda_m(t,\,x,\,D_x)
		\end{pmatrix},
	\end{equation*}
	and $B(t,\,x,\,D_x) = \{b_{i,j}(t,\,x,\,D_x)\}_{1\leq i,\,j \leq m}$ is a matrix of lower order terms which satisfies
	\begin{equation*}
		b_{i,j}(t,\,x,\,D_x)= 0,
	\end{equation*}
	for $ i=1,\,\ldots,\,m-1$ and $j=1,\,\ldots,\,m$, and
	\begin{equation}\label{SH:PROOF:FAC:EstB}
	|\partial_\xi^\alpha D_x^\beta b_{m,j}(t,\,x,\,\xi)| \leq C_{\alpha,\,\beta} \omega(\jbl\xi\jbr),
	\end{equation}
	for $i = m$ and $j = 1,\,\ldots,\,m$.
	
	\subsection{Diagonalization Procedure}\label{SH:PROOF:DIA}
	
	The principal part $A(t,\,x,\,D_x)$ in \eqref{SH:PROOF:FAC:System} can be diagonalized and the regularized roots $\lambda_j$ can be replaced by the original characteristic roots $\tau_j$ since there are invertible operators $H$ and $H^{-1}$ which satisfy the following proposition.
	\begin{Proposition}\label{SH:PROOF:DIA:Lemma:Diagonalize}
		Consider the matrix $T(t,\,x,\,\xi) = \{\beta_{p,\,q}(t,\,x,\,\xi)\}_{0\leq p,\,q\leq m-1}$, where
		\begin{align*}
			&\beta_{p,\,q}(t,\,x,\,\xi) = 0, \qquad p \geq q;\\
			&\beta_{p,\,q}(t,\,x,\,\xi) = \frac{(1-\vp_1(\xi))\jbl\xi\jbr^{k-j}}{d_{p,\,q}(t,\,x,\,\xi)}, \qquad p < q;\\
			&d_{p,\,q}(t,\,x,\,\xi) = \prod\limits_{r=p}^{q-1} \big(\lambda_q(t,\,x,\,\xi) - \lambda_r(t,\,x,\,\xi)\big), \; \vp_1 \in \Czi,\; \vp_1 = 1 \text{ for } |\xi| \leq M,
		\end{align*}
		where $M$ is a large parameter.
		We define $H(t,\,x,\,D_x)$ and  $H^{-1}(t,\,x,\,D_x)$ to be the pseudodifferential operators with symbols
		\begin{align*}
			H(t,\,x,\,\xi) &= I + T(t,\,x,\,\xi), \text{ and }\\
			H^{-1}(t,\,x,\,\xi) &= I + \sum\limits_{j=1}^{m-1} (-1)^j T^j(t,\,x,\,\xi).
		\end{align*}
		Then the following assertions hold true.
		\begin{enumerate}[label = (\roman*)]
			
			\item The operators $H(t,\,x,\,D_x)$ and $H^{-1}(t,\,x,\,D_x)$ are in $C\big([0,\,T];\,\OPS^{0}\big)$.
			
			\item The composition $(H^{-1}\circ H)(t,\,x,\,D_x)$ satisfies
			\begin{equation*}
				H^{-1}(t,\,x,\,D_x)\circ H(t,\,x,\,D_x) = I + K(t,\,x,\,D_x),		
			\end{equation*}
			where $K(t,\,x,\,D_x) \in C\big([0,\,T];\,\OPS^{-1}\big)$.
			
			\item The operator $(D_t H)(t,\,x,\,D_x)$ belongs to $C\big([0,\,T];\,\OPS^{0,\,\omega}\big)$.
			
			\item The operator $\widehat A(t,\,x,\,D_x) = H^{-1}(t,\,x,\,D_x)\circ A(t,\,x,\,D_x) \circ H(t,\,x,\,D_x)$ belongs  to $C\big([0,\,T];\,\OPS^{1}\big)$ and its full symbol may be written as
			\begin{equation*}
				\widehat A(t,\,x,\,\xi) =
				\begin{pmatrix}
					\tau_1(t,\,x,\,\xi) &&\\
					& \ddots &\\
					& & \tau_m(t,\,x,\,\xi)
				\end{pmatrix} + M(t,\,x,\,\xi),
			\end{equation*}
			where $M(t,\,x,\,\xi) \in C\big([0,\,T];\,\Sy^{0,\,\omega}\big)$ is a lower order term.
			
			\item The operator $\widehat B(t,\,x,\,D_x) = H^{-1}(t,\,x,\,D_x)\circ B(t,\,x,\,D_x) \circ H(t,\,x,\,D_x)$ belongs to $C\big([0,\,T];\,\OPS^{0,\,\omega}\big)$.\label{SH:PROOF:DIA:Lemma:Diagonalize:LOT}
			
		\end{enumerate}
	\end{Proposition}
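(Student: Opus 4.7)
My plan is to verify (i)--(v) by direct symbol calculus, relying only on the estimates on $\lambda_j$ from Proposition~\ref{SH:PROOF:ROOTS:Lemma:RegularRoots} and Remark~\ref{SH:PROOF:ROOTS:Remark:RegularRoots} together with the composition result Proposition~\ref{APP:PSEUDO:CALC:COMP:SmO}. For (i), each entry $\beta_{p,\,q}$ of $T$ is a quotient whose numerator is of order $\jxi^{q-p}$ and vanishes for $|\xi|\leq M$, while by strict hyperbolicity and Proposition~\ref{SH:PROOF:ROOTS:Lemma:RegularRoots}(iii) the denominator $d_{p,\,q}$ is bounded below by $C\jxi^{q-p}$ on the support of $1-\vp_1$. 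Iterated application of the quotient rule, combined with the $\Sy^1$-bounds on the $\lambda_j$, then yields $\beta_{p,\,q}\in C([0,T];\Sy^0)$. Since $T$ is strictly upper triangular of size $m\times m$, one has $T^m\equiv 0$ pointwise, so $H^{-1}=I+\sum_{j=1}^{m-1}(-1)^j T^j$ is a polynomial in $T$ and lies in $C([0,T];\OPS^0)$. Pointwise this construction gives $H^{-1}(t,x,\xi)H(t,x,\xi)=I$ exactly, whence $H^{-1}\circ H-I$ consists only of the standard composition remainder $\sum_{|\alpha|\geq 1}\tfrac{1}{\alpha!}\partial_\xi^\alpha H^{-1}\cdot D_x^\alpha H$, which proves (ii) with $K\in\OPS^{-1}$. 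For (iii), each $t$-derivative of $\beta_{p,\,q}$ falls on one factor $\lambda_r$ in $d_{p,\,q}$, replacing it by $\partial_t\lambda_r$; by Remark~\ref{SH:PROOF:ROOTS:Remark:RegularRoots}(i) this introduces a factor of $\omega(\jxi)$ without changing the net symbol order, so $D_tT\in C([0,T];\Sy^{0,\,\omega})$ and therefore $D_tH\in C([0,T];\OPS^{0,\,\omega})$.

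The heart of the proof is (iv). The matrix $T$ has been chosen precisely so that the identity
\[
A(t,x,\xi)\,H(t,x,\xi)=H(t,x,\xi)\,\mathrm{diag}(\lambda_1,\ldots,\lambda_m)
\]
holds exactly at the pointwise symbol level for $|\xi|>M$: writing $A=\mathrm{diag}(\lambda_j)+N$ with $N_{p,\,p+1}=\jxi$ and equating entries on both sides reduces to the recursion $(\lambda_p-\lambda_q)T_{p,\,q}+\jxi\,T_{p+1,\,q}=-\jxi\,\delta_{q,\,p+1}$, whose unique strictly upper-triangular solution is exactly the stated formula for $\beta_{p,\,q}$. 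The low-frequency cutoff $\vp_1$ contributes only a smoothing remainder in $\Sy^{-\infty}$. Multiplying on the left by $H^{-1}$, using (ii) to identify $H^{-1}\circ H$ with $I$ modulo $\OPS^{-1}$, and invoking Proposition~\ref{APP:PSEUDO:CALC:COMP:SmO} to handle the composition-remainder tails at each of the two compositions (each $|\alpha|\geq 1$ term lowers the order by at least one and hence lies in $\Sy^0\subset\Sy^{0,\,\omega}$), one obtains the operator-level identity $\widehat A=\mathrm{diag}(\lambda_1,\ldots,\lambda_m)$ modulo $C([0,T];\OPS^{0,\,\omega})$. Finally, Remark~\ref{SH:PROOF:ROOTS:Remark:RegularRoots}(ii) yields $\lambda_j-\tau_j\in C([0,T];\Sy^{0,\,\omega})$, so the diagonal $\mathrm{diag}(\lambda_j)$ may be replaced by $\mathrm{diag}(\tau_j)$ at the cost of an additional $C([0,T];\Sy^{0,\,\omega})$ term, which together with the earlier errors constitutes $M(t,x,\xi)$.

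Statement (v) is then a direct composition: by \eqref{SH:PROOF:FAC:EstB}, only the last row of $B$ is nonzero and its entries lie in $\Sy^{0,\,\omega}$, and composing on the left and right with $H^{\pm 1}\in\OPS^0$ preserves membership in $\OPS^{0,\,\omega}$ by Proposition~\ref{APP:PSEUDO:CALC:COMP:SmO}. The main obstacle throughout is the bookkeeping in (iv): one must track three distinct error sources---the low-frequency cutoff, the two layers of composition remainders, and the regularization difference $\lambda_j-\tau_j$---and verify that each of them lands in $C([0,T];\OPS^{0,\,\omega})$ rather than in a strictly larger class.
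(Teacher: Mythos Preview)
Your proof is correct. The paper states this proposition without proof, treating the diagonalization as standard; your argument fills in precisely the details one would expect, namely the nilpotency of $T$ for (i)--(ii), the appearance of $\partial_t\lambda_r\in\Sy^{1,\,\omega}$ for (iii), the exact symbol-level intertwining identity $A\,H=H\,\mathrm{diag}(\lambda_1,\ldots,\lambda_m)$ verified via the recursion for (iv), and the composition rule Proposition~\ref{APP:PSEUDO:CALC:COMP:SmO} for (v).

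Two minor remarks that do not affect correctness. First, in (iv) you invoke the inclusion $\Sy^0\subset\Sy^{0,\,\omega}$ for the composition-remainder terms; this is justified because concavity of $\mu$ forces $\omega(s)=s\,\mu(1/s)\geq\mu(1)>0$ for $s\geq 1$, so $\omega$ is bounded below and the inclusion holds. Second, the identity $A\,H=H\,\mathrm{diag}(\lambda_j)$ holds exactly only outside $\supp\vp_1$, not merely for $|\xi|>M$; in the transition region the error is still compactly supported in $\xi$ and hence smoothing, so your conclusion is unaffected.
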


	We perform the described diagonalization by setting $\widehat{U} = \widehat{U}(t,\,x) = H^{-1}(t,\,x,\,D_x) U(t,\,x)$ and obtain that
	\begin{equation*}
		P(t,\,x,\,D_t,\,D_x) U(t,\,x)=\big(0, \,\ldots\, 0,\,	 f(t,\,x)\big)^T,
	\end{equation*}
	is equivalent to
	\begin{align*}
		&		
	\begin{aligned}[t]
			& D_t\circ H(t,\,x,\,D_x) \widehat{U}(t,\,x) 	- A(t,\,x,\,D_x)\circ H(t,\,x,\,D_x) \widehat{U}(t,\,x) \\
			& \qquad + B(t,\,x,\,D_x)\circ H(t,\,x,\,D_x) \widehat{U}(t,\,x)
		\end{aligned}\\
		& \qquad =
		\begin{aligned}[t]
			& H(t,\,x,\,D_x) D_t \widehat{U}(t,\,x) - A(t,\,x,\,D_x)\circ H(t,\,x,\,D_x) \widehat{U}(t,\,x) \\
			& + (D_t H)(t,\,x,\,D_x) \widehat{U}(t,\,x)+ B(t,\,x,\,D_x)\circ H(t,\,x,\,D_x) \widehat{U}(t,\,x)
		\end{aligned}\\
		&\qquad =\big(0, \,\ldots\, 0,\,	 f(t,\,x)\big)^T.
	\end{align*}
	We set $F = F(t,\,x) = \big(0, \,\ldots\, 0,\,	 f(t,\,x)\big)^T$ and apply $H^{-1}(t,\,x,\,D_x)$ to both sides of the previous pseudodifferential system to obtain
	\begin{align}
		&		
		\begin{aligned}[m]
			&D_t \widehat{U}(t,\,x) + K(t,\,x,\,D_x) D_t \widehat{U}(t,\,x)\\
			&\qquad - H^{-1}(t,\,x,\,D_x)\circ A(t,\,x,\,D_x)\circ H(t,\,x,\,D_x) \widehat{U}(t,\,x) \\
			&\qquad +  H^{-1}(t,\,x,\,D_x)\circ (D_t H)(t,\,x,\,D_x) \widehat{U}(t,\,x)\\
			&\qquad +  H^{-1}(t,\,x,\,D_x)\circ B(t,\,x,\,D_x)\circ H(t,\,x,\,D_x) \widehat{U}(t,\,x) \\
		&\quad =H^{-1}(t,\,x,\,D_x) F(t,\,x).\label{SH:PROOF:DIA:Eq:1}
		\end{aligned}
	\end{align}
	Applying Proposition~\ref{SH:PROOF:DIA:Lemma:Diagonalize} yields
	\begin{align*}
		H^{-1}(t,\,x,\,D_x)\circ A(t,\,x,\,D_x)\circ H(t,\,x,\,D_x) &= \xbar A(t,\,x,\,D_x) + M(t,\,x,\,D_x),\\
		H^{-1}(t,\,x,\,D_x)\circ B(t,\,x,\,D_x)\circ H(t,\,x,\,D_x) &= \widehat B(t,\,x,\,D_x) \in  C\big([0,\,T];\,\OPS^{0,\,\omega}\big), \\
		H^{-1}(t,\,x,\,D_x)\circ (D_t H)(t,\,x,\,D_x) &\in C\Big([0,\,T];\,\OPS^{0,\,\omega}\big),
	\end{align*}
	where $ M(t,\,x,\,D_x)\in  C\big([0,\,T];\,\OPS^{0,\,\omega}\big)$ and
	\begin{equation*}
		\xbar A(t,\,x,\,D_x) =
		\begin{pmatrix}
			\tau_1(t,\,x,\,D_x) &&\\
			& \ddots &\\
			& & \tau_m(t,\,x,\,D_x)
		\end{pmatrix}
	\end{equation*}
	is diagonal. Hence, setting \[ \xbar B(t,\,x,\,D_x) = \widehat{B}(t,\,x,\,D_x) - M(t,\,x,\,D_x) + H^{-1}(t,\,x,\,D_x)\circ (D_t H)(t,\,x,\,D_x),\] the pseudodifferential system  \eqref{SH:PROOF:DIA:Eq:1} may be written as
	\begin{equation*}
		\begin{aligned}[t]
			&\big(I + K(t,\,x,\,D_x)\big) D_t \widehat{U}(t,\,x)
			- \xbar A(t,\,x,\,D_x)\widehat{U}(t,\,x) +\xbar B(t,\,x,\,D_x) \widehat{U}(t,\,x)\\
			& \quad =H^{-1}(t,\,x,\,D_x) F(t,\,x).
		\end{aligned}
	\end{equation*}
	We now observe, that $I + K(t,\,x,\,D_x)$ is invertible for sufficiently large values of $M$. Recall, that $K(t,\,x,\,D_x) \in C\big([0,\,T];\,\OPS^{-1}\big)$. Therefore we may estimate the operator norm of $K(t,\,x,\,D_x)$ by $C M^{-1}$, where $M$ is the parameter introduced in the definition of $H(t,\,x,\,D_x)$ in Proposition~\ref{SH:PROOF:DIA:Lemma:Diagonalize}. Choosing $M$ sufficiently large ensures that the operator norm of $K(t,\,x,\,D_x)$ is strictly smaller than $1$, which guarantees the existence of
	\begin{equation*}
		\big(I + K(t,\,x,\,D_x)\big)^{-1} = \sum\limits_{k=0}^{\infty} (-K(t,\,x,\,D_x))^k \in C\big([0,\,T];\,\OPS^{0}\big).
	\end{equation*}
	This means, that
	\begin{equation*}
		P(t,\,x,\,D_t,\,D_x) U(t,\,x)=\big(0, \,\ldots\, 0,\,	 f(t,\,x)\big)^T
	\end{equation*}
	is equivalent to
	\begin{equation}\label{SH:PROOF:DIA:Eq:2}
	L(t,\,x,\,D_t,\,D_x) \widehat{U}(t,\,x) = \widetilde H^{-1}(t,\,x,\,D_x) F(t,\,x),
	\end{equation}
	where
	\begin{equation*}
		L(t,\,x,\,D_t,\,D_x) =D_t - \widetilde{A}(t,\,x,\,D_x) + \widetilde{B}(t,\,x,\,D_x),
	\end{equation*}
	with
	\begin{align*}
		\widetilde H^{-1}(t,\,x,\,D_x) &= \big(I + K(t,\,x,\,D_x)\big)^{-1} \circ H^{-1}(t,\,x,\,D_x) \in C\big([0,\,T];\, \OPS^0\big),\\
		\widetilde A(t,\,x,\,D_x) &= \big(I + K(t,\,x,\,D_x)\big)^{-1}\circ  \xbar A(t,\,x,\,D_x)\in C\big([0,\,T];\, \OPS^1\big),\\
		\widetilde B(t,\,x,\,D_x) &= \big(I + K(t,\,x,\,D_x)\big)^{-1}\circ  \xbar B(t,\,x,\,D_x)\in C\big([0,\,T];\, \OPS^{0,\,\omega}\big).
	\end{align*}
	
	From this point on, the proofs of Theorem~\ref{SH:MR:THEOREM:STRONG:MainTheorem} and Theorem~\ref{SH:MR:THEOREM:WEAK:MainTheorem} differ. We first discuss how to proceed in the case of strong moduli of continuity and then conclude this chapter by finishing the proof for weak moduli of continuity as well.
	
		\subsection{Conjugation for Strong Moduli of Continuity}\label{SH:PROOF:CON:STRONG}
		
		Before we apply sharp G{\aa}rding's inequality and Gronwall's lemma we perform another change of variables, which allows us to control the lower order terms $\widetilde B(t,\,x,\,D_x)$.
		
		We set $\widehat U(t,\,x) = \jbl D_x \jbr^{-\nu} e^{\kappa t \omega(\jbl D_x\jbr)} V(t,\,x)$, $t\in[0,\,T]$, where $\kappa$ is a suitable positive constant, which is determined later and $\nu$ is the index of the Sobolev space $H^\nu$ which is related to the space in which we want to have well-posedness. We obtain that the pseudodifferential system  \eqref{SH:PROOF:DIA:Eq:2} is equivalent to
		\begin{equation}\label{SH:PROOF:CON:STRONG:Eq:1}
		\widetilde L(t,\,x,\,D_t,\,D_x) V(t,\,x) =\jbl D_x \jbr^{\nu} e^{-\kappa t \omega(\jbl D_x\jbr)} \widetilde H^{-1}(t,\,x,\,D_x) F(t,\,x),
		\end{equation}
		where
		\begin{align*}
			& \widetilde L(t,\,x,\,D_t,\,D_x) = D_t - \jbl D_x \jbr^{\nu} e^{-\kappa t \omega(\jbl D_x\jbr)} \circ\widetilde A(t,\,x,\,D_x) \circ e^{\kappa t \omega(\jbl D_x\jbr)} \jbl D_x \jbr^{-\nu}\\
			& \qquad  + \jbl D_x \jbr^{\nu} e^{-\kappa t \omega(\jbl D_x\jbr)}\circ \widetilde B(t,\,x,\,D_x)\circ e^{\kappa t \omega(\jbl D_x\jbr)}\jbl D_x \jbr^{-\nu} - \I \kappa \omega(\jbl D_x \jbr).
		\end{align*}
	
		Applying the composition rule for pseudodifferential operators of finite order, we obtain that
		\begin{align*}
			\jbl D_x \jbr^{\nu} e^{-\kappa t \omega(\jbl D_x\jbr)} \circ\widetilde A \circ e^{\kappa t \omega(\jbl D_x\jbr)} \jbl D_x \jbr^{-\nu} &= \widetilde A(t,\,x,\,D_x) + B_1(t,\,x,\,D_x),\\
			\jbl D_x \jbr^{\nu} e^{-\kappa t \omega(\jbl D_x\jbr)}\circ \widetilde B\circ e^{\kappa t \omega(\jbl D_x\jbr)}\jbl D_x \jbr^{-\nu}  &= \widetilde B(t,\,x,\,D_x) + B_2(t,\,x,\,D_x),
		\end{align*}
		where $B_1,\,B_2 \in C\big([0,\,T]; \OPS^0\big)$.
		
		We conclude that, for $t\in [0,\,T]$, the pseudodifferential system \eqref{SH:PROOF:DIA:Eq:2} is equivalent to
		\begin{equation}\label{SH:PROOF:CON:STRONG:Eq:3}
		\widetilde L(t,\,x,\,D_t,\,D_x) V(t,\,x) = \jbl D_x \jbr^{\nu} e^{-\kappa t \omega(\jbl D_x\jbr)} \widetilde H^{-1}(t,\,x,\,D_x) F(t,\,x),
		\end{equation}
		where
		\begin{align*}
			\widetilde L(t,\,x,\,D_t,\,D_x) &= D_t - \widetilde A(t,\,x,\,D_x) + \check B(t,\,x,\,D_x) - \I \kappa \omega(\jbl D_x \jbr),
		\end{align*}
		and $\check B \in C\big([0,\,T]; \OPS^{0,\,\omega}\big)$.
		
		\subsection{Well-Posedness of an Auxiliary Cauchy Problem for Strong Moduli of Continuity}\label{SH:PROOF:AUXCP:STRONG}
		
		We consider the auxiliary Cauchy problem
		\begin{equation}\label{SH:PROOF:AUXCP:STRONG:CP}
			\begin{aligned}
				\partial_t V =	 \big(\I \widetilde A(t,\,x,\,D_x) - \I \check B(t,\,x,\,D_x) - \kappa \omega(\jbl D_x \jbr)\big) V \\+ \I \jbl D_x \jbr^{\nu}e^{\rho(t) \omega(\jbl D_x \jbr)} \widetilde H^{-1}(t,\,x,\,D_x) F(t,\,x),
			\end{aligned}
		\end{equation}
		for $(t,\,x) \in [0,\,T]\times \R^n$,
		with initial conditions
		\begin{align*}
			V(0,\,x) =
			\big(
			v_0(x),\, \ldots,\,	v_{m-1}(x)\big)^T,
		\end{align*}
		where
		\begin{align*}
			v_j(x) &= \jbl D_x \jbr^\nu e^{-\kappa t \omega(\jbl D_x \jbr)} H^{-1}(0,\,x,\,D_x) \jbl D_x \jbr^{m-1-j}\\ &\qquad\times (D_t-  \lambda_j(0,\,x,\,D_x))\cdots(D_t - \lambda_1(0,\,x,\,D_x)) u(0,\,x)
		\end{align*}
		for $j = 0,\,\ldots m-1$.
		
		Recalling that $\partial_t \|V\|^2_{L^2}  = 2 \Re\big[(\partial_t V,\,V)_{L^2}\big]$ we obtain
		\begin{equation*}
			\begin{aligned}
			\partial_t\|V\|_{L^2}^2 = &2 \Re\big[((\I \widetilde A - \I\check B - \kappa\omega(\jbl D_x \jbr))V,\,V)_{L^2}\big]
			\\+&2 \Re\big[(\jbl D_x \jbr^{\nu}e^{-\kappa t \omega(\jbl D_x \jbr)} \widetilde H^{-1} F,\,V)_{L^2}\big].
			\end{aligned}
		\end{equation*}
		We observe that
		\begin{align*}
			\begin{aligned}
			 \Re\big[\I \widetilde A(t,\,x,\,\xi) - \I\check B(t,\,x,\,\xi) - \kappa\omega(\jbl \xi \jbr)\big] = 	-\Re\big[\I\check B(t,\,x,\,\xi) + \kappa\omega(\jbl \xi \jbr)\big],
			\end{aligned}
		\end{align*}
		since  $\Re\big[\I \widetilde A(t,\,x,\,\xi)\big] = 0$ (by assumption~\ref{SH:MR:THEOREM:STRONG:ASSUME:StrictHyp}).
		Taking account of $\check B \in C\big([0,\,T]; \OPS^{0,\,\omega}\big)$ it follows
		\begin{equation*}
			\Re\big[\I\check B(t,\,x,\,\xi) + \kappa\omega(\jbl \xi \jbr)\big] \geq \Re\big[-C_B \omega(\jxi) + \kappa\omega(\jbl \xi \jbr)\big] \geq 0
		\end{equation*}
		if we choose $\kappa > C_B$, where $C_B >0$ depends on $\check B$ and, hence, it is determined by the coefficients of the original differential equation. Thus, we are able to apply sharp G{\aa}rding's inequality to obtain
		\begin{align*}
			2 \Re\big[\big((\I \widetilde A - \I\check B - \kappa\omega(\jbl D_x \jbr))V,\,V\big)_{L^2}\big]	\leq C \| V \|^2_{L^2}.
		\end{align*}
		This yields
		\begin{equation} \label{SH:PROOF:AUXCP:STRONG:BeforeGronwall}
		\partial_t\|V\|_{L^2}^2 \leq C \|V\|_{L^2}^2 + C \| \jbl D_x \jbr^{\nu} e^{-\kappa t \omega(\jbl D_x \jbr)} \widetilde H^{-1} F\|_{L^2}^2.
		\end{equation}
	%
		We apply Gronwall's Lemma to \eqref{SH:PROOF:AUXCP:STRONG:BeforeGronwall} and obtain
		\begin{equation}\label{SH:PROOF:AUXCP:STRONG:Gronwall}
			\begin{aligned}[m]
				\|V(t,\,\cdot)\|_{L^2}^2 &\leq C \|V(0,\,\cdot)\|_{L^2}^2\\&+ C \int\limits_0^t \| \jbl D_x \jbr^{\nu} e^{-\kappa z \omega(\jbl D_x\jbr)} \widetilde H^{-1}(z,\,x,\,D_x) F(z,\,\cdot)\|_{L^2}^2 \rmd z,
			\end{aligned}
		\end{equation}
		for $t\in[0,\,T]$.
		We recall that $\widetilde H^{-1}(t,\,x,\,D_x)$ is a pseudo-differential operator of order zero and use assumption~\ref{SH:MR:THEOREM:STRONG:ASSUME:Inhomogeneity} to obtain that
		\begin{equation*}
			\| \jbl D_x \jbr^\nu e^{-\kappa z \omega(\jbl D_x\jbr)} \widetilde  H^{-1}(z,\,x,\,D_x) F(z,\,\cdot)\|_{L^2}^2  \leq C_{s,\,z} < \infty,
		\end{equation*}
		similarly, in view of assumption~\ref{SH:MR:THEOREM:STRONG:ASSUME:DATA}, it is clear that,
		\begin{equation*}
			\|V(0,\,\cdot)\|_{L^2}^2 =  \|\jbl D_x \jbr^{\nu} \widetilde  H^{-1}(0,\,x,\,D_x) U(0,\,x)\|_{L^2}^2 \leq C < \infty.
		\end{equation*}
		
		We conclude that
		\begin{equation*}
			\begin{aligned}
				\|\jbl D_x \jbr^{\nu} e^{-\kappa t \omega(\jbl D_x\jbr)} &\widehat U(t,\,\cdot)\|_{L^2}^2 \leq C \| \jbl D_x \jbr^\nu \widehat U(0,\,\cdot)\|_{L^2}^2\\&+ C \int\limits_0^t \| \jbl D_x \jbr^{\nu} e^{-\kappa z \omega(\jbl D_x\jbr)} \widetilde H^{-1}(z,\,x,\,D_x) F(z,\,\cdot)\|_{L^2}^2 \rmd z,
			\end{aligned}
		\end{equation*}
		which means that the solution $\widehat U$ to \eqref{SH:PROOF:DIA:Eq:2} belongs to $C([0,\,T]; H^\nu_{\omega,\,-\kappa T})$.
		Returning to our original solution $u = u(t,\,x)$ we obtain that
		\begin{equation*}
			\begin{aligned}[t]
				\sum\limits_{j = 0}^{m-1} \big\|\jbl D_x \jbr^\nu e^{- \kappa t \omega(\jbl D_x \jbr)} \partial_t^j u(t,\,\cdot)\big\|^2_{L^2} \leq &C \Big(\sum\limits_{j = 0}^{m-1} \big\|\jbl D_x \jbr^\nu g_j(0,\,\cdot)\big\|^2_{L^2}\\& + \int\limits_0^t \big\|\jbl D_x \jbr^\nu e^{- \kappa z \omega(\jbl D_x \jbr)} f(z,\,\cdot)\big\|^2_{L^2} \rmd z\Big),
			\end{aligned}
		\end{equation*}
		for $0 \leq t \leq T$ and some $C = C_\nu > 0$.
		This, in turn means that the original Cauchy problem \eqref{SH:MR:THEOREM:WEAK:MainTheorem:Equ} is well-posed for $u=u(t,\,x)$, with
		\begin{equation*}
			u \in \bigcap\limits_{j = 0}^{m-1} C^{m-1-j}([0,\,T];\, H^{\nu+j}_{\omega,\,-\kappa T}).
		\end{equation*}
		This concludes the proof for strong moduli of continuity.

	\subsection{Conjugation for Weak Moduli of Continuity}\label{SH:PROOF:CON:WEAK}
	
	In Section~\ref{SH:PROOF:DIA} we left off by observing that
	\begin{equation*}
		P(t,\,x,\,D_t,\,D_x) U(t,\,x)=\big(0, \,\ldots\, 0,\,	 f(t,\,x)\big)^T
	\end{equation*}
	is equivalent to
	\begin{equation} \tag{\ref{SH:PROOF:DIA:Eq:2}}
		L(t,\,x,\,D_t,\,D_x) \widehat{U}(t,\,x) = \widetilde H^{-1}(t,\,x,\,D_x) F(t,\,x),
	\end{equation}
	where
	\begin{equation*}
	L(t,\,x,\,D_t,\,D_x) =D_t - \widetilde{A}(t,\,x,\,D_x) + \widetilde{B}(t,\,x,\,D_x)
	\end{equation*}
	with
	\begin{align*}
		\widetilde H^{-1}(t,\,x,\,D_x) &\in C\big([0,\,T];\, \OPS^0\big),\\
		\widetilde B(t,\,x,\,D_x) &\in C\big([0,\,T];\, \OPS^{0,\,\omega}\big), \text{ and}\\
		\widetilde A(t,\,x,\,D_x) &= \begin{pmatrix}
			\tau_1(t,\,x,\,D_x) &&\\
			& \ddots &\\
			& & \tau_m(t,\,x,\,D_x)
		\end{pmatrix}
		\in C\big([0,\,T];\, \OPS^1\big)
	\end{align*}
is diagonal. As in the case of strong moduli of continuity, this time we also perform a change of variables to control the lower order terms $\widetilde B(t,\,x,\,D_x)$. However, this time the involved pseudodifferential operators are of infinite order.	
	
	We set \[ \widehat U(t,\,x) = \jbl D_x \jbr^{-\nu} e^{-\kappa(T^\ast-t) \omega(\jbl D_x\jbr)} V(t,\,x), \,t\in[0,\,T^\ast], \]
where $\kappa$ and $T^\ast$ are suitable positive constants, which are determined later and $s \in \R$. We obtain that the system \eqref{SH:PROOF:DIA:Eq:2} is equivalent to
	\begin{equation}\label{SH:PROOF:CON:WEAK:Eq:1}
	\widetilde L(t,\,x,\,D_t,\,D_x) V(t,\,x) =\jbl D_x \jbr^{\nu} e^{\kappa(T^\ast-t) \omega(\jbl D_x\jbr)} \widetilde H^{-1}(t,\,x,\,D_x) F(t,\,x),
	\end{equation}
	where
	\begin{align*}
		\widetilde L(t,\,x,\,D_t,\,D_x) = D_t &- \jbl D_x \jbr^{\nu}e^{\kappa(T^\ast-t) \omega(\jbl D_x\jbr)} \circ\widetilde A \circ e^{-\kappa(T^\ast-t) \omega(\jbl D_x\jbr)}\jbl D_x \jbr^{-\nu}\\
		&  + \jbl D_x \jbr^{\nu}e^{\kappa(T^\ast-t) \omega(\jbl D_x\jbr)}\circ \widetilde B\circ e^{-\kappa(T^\ast-t) \omega(\jbl D_x\jbr)}\jbl D_x \jbr^{-\nu}\\ &- \I \kappa \omega(\jbl D_x \jbr).
	\end{align*}
	\begin{Remark}
		We note that the operator $e^{\kappa(T^\ast-t) \omega(\jbl D_x\jbr)} $ is of infinite order since $o(\omega(s)) = \log(s)$. Therefore, we cannot apply the standard asymptotic expansion of the product of pseudodifferential operators of finite order.
	\end{Remark}
	
	We write $\rho(t) = \kappa(T^\ast-t)$ and choose $T^\ast$ such that the conjugation condition
	\begin{equation*}
		\rho(t) = \kappa(T^\ast-t) \leq \kappa T^\ast  < \delta_0
	\end{equation*}
	is satisfied for all $t \in [0,\,T^\ast]$, where $\delta_0$ is the constant given by assumption \ref{SH:MR:THEOREM:WEAK:ASSUME:ConEtaK} and $\kappa > 0$ is determined later on.
	In view of condition~\eqref{SH:MR:THEOREM:WEAK:MainTheorem:Equ} and by assumption~\ref{SH:MR:THEOREM:WEAK:ASSUME:ConEtaK} it is clear that
	\begin{equation}\label{SH:PROOF:CON:WEAK:OmegaK}
	\inf\limits_{l \in \N} \frac{K_l}{\jxi^l} \leq C e^{-\delta_0 \eta(\jxi)} \leq C e^{-\delta_0 \omega(\jxi)}.
	\end{equation}
	Assumption~\ref{SH:MR:THEOREM:WEAK:ASSUME:Eta} and \eqref{SH:PROOF:CON:WEAK:OmegaK} enable us to apply Proposition~\ref{APP:PSEUDO:CALC:CON} and Proposition~\ref{APP:PSEUDO:CALC:CON:EstR}, which allow us to conclude that
	\begin{equation}\label{SH:PROOF:CON:WEAK:Eq2}
	\begin{aligned}[m]
	\widetilde A_\omega(t,\,x,\,D_x) &= \jbl D_x \jbr^{\nu}e^{\rho(t) \omega(\jbl D_x\jbr)} \circ \widetilde A(t,\,x,\,D_x) \circ e^{-\rho(t) \omega(\jbl D_x\jbr)}\jbl D_x \jbr^{-\nu}, \text{ and }\\
	\widetilde B_\omega(t,\,x,\,D_x) &= \jbl D_x \jbr^{\nu}e^{\rho(t) \omega(\jbl D_x\jbr)} \circ \widetilde B(t,\,x,\,D_x) \circ e^{-\rho(t) \omega(\jbl D_x\jbr)}\jbl D_x \jbr^{-\nu},
	\end{aligned}
	\end{equation}
	satisfy
	\begin{align*}
		\widetilde A_\omega(t,\,x,\,\xi) &= \widetilde A(t,\,x,\,\xi) + \sum\limits_{0 < |\gamma| < N} D_x^\gamma \widetilde A(t,\,x,\,\xi) \chi_\gamma(\xi) + r_N(\widetilde A;\,x,\,\xi), \text{ and }\\
		\widetilde B_\omega(t,\,x,\,\xi) &= \widetilde B(t,\,x,\,\xi) + \sum\limits_{0 < |\gamma| < N} D_x^\gamma \widetilde B(t,\,x,\,\xi) \chi_\gamma(\xi) + r_N(\widetilde B;\,x,\,\xi),
	\end{align*}
	where
	\begin{equation*}
		|\partial_\xi^\alpha \chi_\gamma(\xi)| \leq C_{\alpha,\,\gamma} \rho(t)^{|\gamma|} \jxi^{-|\alpha|-|\gamma|}(\omega(\jxi))^{|\gamma|},
	\end{equation*}
	and
	\begin{align*}
		\big|\partial_\xi^\alpha D_x^\beta r_N(\widetilde A;\,x,\,\xi)\big| &\leq C_{\alpha,\,\beta,\,N}  \rho(t)^{N}\jxi^{1-|\alpha|} \Big(\frac{\omega(\jxi)}{\jxi}\Big)^N,\text{ and }\\
		\big|\partial_\xi^\alpha D_x^\beta r_N(\widetilde B;\,x,\,\xi)\big| &\leq C_{\alpha,\,\beta,\,N}  \rho(t)^{N}\jxi^{-|\alpha|} \omega(\jxi) \Big(\frac{\omega(\jxi)}{\jxi}\Big)^N,
	\end{align*}
	for $t \in [0,\,T^\ast]$.
	Thus,
	\begin{align*}
		\widetilde A_\omega(t,\,x,\,\xi) &= \widetilde A(t,\,x,\,\xi) + \sum\limits_{0 < |\gamma| \leq N} R_{\gamma}(\widetilde A;\,t,\,x,\,\xi), \text{ and }\\
		\widetilde B_\omega(t,\,x,\,\xi) &= \widetilde B(t,\,x,\,\xi) + \sum\limits_{0 < |\gamma| \leq N} R_{\gamma}(\widetilde B;\,t,\,x,\,\xi),
	\end{align*}
	where
	\begin{align*}
		R_{\gamma}(\widetilde A;\,t,\,x,\,\xi) &\in C\big([0,\,T];\Sy^{1-|\gamma|,\,\omega^{|\gamma|}}\big) \subset C\big([0,\,T^\ast];\Sy^{0,\,\omega}\big),\text{ and }\\
		R_{\gamma}(\widetilde B;\,t,\,x,\,\xi) &\in C\big([0,\,T];\Sy^{-|\gamma|,\,\omega^{|\gamma|+1}}\big) \subset C\big([0,\,T^\ast];\Sy^{0,\,\omega}\big).
	\end{align*}
	The above observations show that the conjugation we perform in \eqref{SH:PROOF:CON:WEAK:Eq2}, does not change the principal part of the operators $\widetilde A(t,\,x,\,D_x)$ and $\widetilde B(t,\,x,\,D_x)$ but introduces more lower order terms. We denote these new lower order terms by
	\begin{equation*}
		R(\widetilde A, \widetilde B;\,t,\,x,\,D_x) =  \sum\limits_{0 < |\gamma| \leq N} R_{\gamma}(\widetilde A;\,t,\,x,\,D_x)+ R_{\gamma}(\widetilde B;\,t,\,x,\,D_x),
	\end{equation*}
	and set
	\begin{equation*}
		\check B(t,\,x,\,D_x) = \widetilde B(t,\,x,\,D_x) +  R(\widetilde A, \widetilde B;\,t,\,x,\,D_x) \in C\big([0,\,T^\ast];\OPS^{0,\,\omega}\big)
	\end{equation*}
	with
	\begin{equation}\label{SH:PROOF:CON:WEAK:EstB}
	|\partial_\xi^\alpha D_x^\beta \check B(t,\,x,\,\xi)| \leq C_{\alpha,\,\beta} (1+\rho(t))\jxi^{-|\alpha|} \omega(\jxi).
	\end{equation}
	We conclude that, for $t\in [0,\,T^\ast]$, the system \eqref{SH:PROOF:DIA:Eq:2} is equivalent to
	\begin{equation}\label{SH:PROOF:CON:WEAK:Eq:3}
	\widetilde L(t,\,x,\,D_t,\,D_x) V(t,\,x) =\jbl D_x \jbr^{\nu}e^{\kappa(T^\ast-t) \omega(\jbl D_x\jbr)} \widetilde H^{-1}(t,\,x,\,D_x) F(t,\,x),
	\end{equation}
	where
	\begin{align*}
		\widetilde L(t,\,x,\,D_t,\,D_x) &= D_t - \widetilde A(t,\,x,\,D_x) + \check B(t,\,x,\,D_x) - \I \kappa \omega(\jbl D_x \jbr).
	\end{align*}
	Rearranging the system \eqref{SH:PROOF:CON:WEAK:Eq:3} yields
	\begin{equation*}
		\begin{aligned}
			\partial_t V =	 \big(\I \widetilde A(t,\,x,\,D_x) - \I \check B(t,\,x,\,D_x) - \kappa \omega(\jbl D_x \jbr)\big) V \\+ \I \jbl D_x \jbr^{\nu}e^{\rho(t) \omega(\jbl D_x \jbr)} \widetilde H^{-1}(t,\,x,\,D_x) F(t,\,x).
		\end{aligned}
	\end{equation*}
	
	\subsection{Well-Posedness of an Auxiliary Cauchy Problem for Weak Moduli of Continuity}\label{SH:PROOF:AUXCP:WEAK}
	
	In this section we consider the auxiliary Cauchy problem
	\begin{equation}\label{SH:PROOF:AUXCP:WEAK:CP}
		\begin{aligned}
			\partial_t V =	 \big(\I \widetilde A(t,\,x,\,D_x) - \I \check B(t,\,x,\,D_x) - \kappa \omega(\jbl D_x \jbr)\big) V \\+ \I \jbl D_x \jbr^{\nu}e^{\rho(t) \omega(\jbl D_x \jbr)} \widetilde H^{-1}(t,\,x,\,D_x) F(t,\,x).
		\end{aligned}
	\end{equation}
	for $(t,\,x) \in [0,\,T^\ast]\times \R^n$,
	with initial condition
	\begin{align*}
		V(0,\,x) =
		\big(
		v_0(x),\, \ldots,\,	v_{m-1}(x)\big)^T,
	\end{align*}
	where
	\begin{align*}
		v_j(x) &= \jbl D_x \jbr^{\nu} e^{\rho(0)\omega(\jbl D_x \jbr)} H^{-1}(0,\,x,\,D_x) \jbl D_x \jbr^{m-1-j}\\ &\qquad\times (D_t-  \lambda_j(0,\,x,\,D_x))\cdots(D_t - \lambda_1(0,\,x,\,D_x)) u(0,\,x),
	\end{align*}
	for $j = 0,\,\ldots m-1$.
	
	Recalling that $\partial_t \|V\|^2_{L^2}  = 2 \Re\big[(\partial_t V,\,V)_{L^2}\big]$ we obtain
	\begin{equation*}
		\begin{aligned}
			\partial_t\|V\|_{L^2}^2 = &2 \Re\big[\big((\I \widetilde A - \I\check B - \kappa\omega(\jbl D_x \jbr))V,\,V\big)_{L^2}\big]
			\\+&2 \Re\big[\big(\jbl D_x \jbr^{\nu}e^{\rho(t) \omega(\jbl D_x \jbr)} \widetilde H^{-1} F,\,V\big)_{L^2}\big].
		\end{aligned}
	\end{equation*}
	We observe that
	\begin{equation*}
		\Re\big[\I \widetilde A(t,\,x,\,\xi) - \I\check B(t,\,x,\,\xi) - \kappa\omega(\jbl \xi \jbr)\big]=	-\Re\big[\I\check B(t,\,x,\,\xi) + \kappa\omega(\jbl \xi \jbr)\big],
	\end{equation*}
	since  $\Re[\I \widetilde A(t,\,x,\,\xi)] = 0$ (by assumption~\ref{RESULTS:Both:1}).
	In view of \eqref{SH:PROOF:CON:WEAK:EstB} we obtain
	\begin{align*}
		\Re\big[\I\check B(t,\,x,\,\xi) + \kappa\omega(\jbl \xi \jbr)\big] &\geq \Re\big[-C_B (1+\rho(t)) \omega(\jxi) + \kappa\omega(\jbl \xi \jbr)\big]\\
		&\geq \Re\big[-C_B (1+ \kappa T^\ast) \omega(\jxi) + \kappa\omega(\jbl \xi \jbr)\big] \geq 0,
	\end{align*}
	if we choose $T^\ast$ such that $T^\ast C_B < 1$ and $\kappa$ is sufficiently large. Thus, we are able to apply sharp G{\aa}rding's inequality to obtain
	\begin{align*}
		2 \Re\big[\big((\I \widetilde A - \I\check B - \kappa\omega(\jbl D_x \jbr))V,\,V\big)_{L^2}\big]	\leq C \| V \|^2_{L^2}.
	\end{align*}
	This yields
	\begin{equation} \label{SH:PROOF:AUXCP:WEAK:BeforeGronwall}
	\partial_t\|V\|_{L^2}^2 \leq C \|V\|_{L^2}^2 + C \big\| \jbl D_x \jbr^{\nu} e^{\rho(t) \omega(\jbl D_x \jbr)} \widetilde H^{-1} F\big\|_{L^2}^2.
	\end{equation}
	
	We apply Gronwall's Lemma to \eqref{SH:PROOF:AUXCP:WEAK:BeforeGronwall} and obtain
	\begin{equation}\label{SH:PROOF:AUXCP:WEAK:Gronwall}
	\begin{aligned}
		\|V(t,\,\cdot)\|_{L^2}^2 \leq &C \|V(0,\,\cdot)\|_{L^2}^2 \\&+ C \int\limits_0^t \| \jbl D_x \jbr^{\nu} e^{\rho(z) \omega(\jbl D_x\jbr)} \widetilde H^{-1}(z,\,x,\,D_x) F(z,\,\cdot)\|_{L^2}^2 \rmd z,
	\end{aligned}
	\end{equation}
	for $t\in[0,\,T^\ast]$.
	We recall that $\widetilde H^{-1}(t,\,x,\,D_x)$ is a pseudodifferential operator of order zero and use assumption~\ref{SH:MR:THEOREM:WEAK:ASSUME:Inhomogeneity} to obtain that
	\begin{equation*}
		\| \jbl D_x \jbr^\nu e^{\kappa(T^\ast - z) \omega(\jbl D_x\jbr)} \widetilde  H^{-1}(z,\,x,\,D_x) F(z,\,\cdot)\|_{L^2}^2  \leq C_z < \infty,
	\end{equation*}
	similarly, in view of assumption~\ref{SH:MR:THEOREM:WEAK:ASSUME:DATA}, it is clear that,
	\begin{equation*}
		\|V(0,\,\cdot)\|_{L^2}^2 =  \|\jbl D_x \jbr^{\nu} e^{\kappa T^\ast \omega(\jbl D_x\jbr)} \widetilde  H^{-1}(0,\,x,\,D_x) U(0,\,x)\|_{L^2}^2 \leq C < \infty.
	\end{equation*}
	
	At the moment, we only know that
	\begin{equation*}
		\|V(t,\,\cdot)\|_{L^2}^2 = \|\jbl D_x \jbr^\nu e^{\rho(t) \omega(\jbl D_x \jbr)} \widetilde H^{-1}(t,\,x,\,D_x) U(t,\,\cdot)\|_{L^2}^2 \leq C < \infty,
	\end{equation*}
	for $t \in [0,\,T^\ast]$. We use a continuation argument to prove that
	\begin{equation*}
		\|\jbl D_x \jbr^\nu e^{\delta^\ast \eta(\jbl D_x \jbr)} \widetilde H^{-1}(t,\,x,\,D_x) U(t,\,\cdot)\|_{L^2}^2 \leq C < \infty,
	\end{equation*}
	for $t \in [0,\,T]$.
	For this purpose, we choose $\delta^\ast < \min\{\delta_0,\,\delta_1,\,\delta_2\}$, and observe that, by the definition of $V(t,\,x)$,
	\begin{equation*}
		\begin{aligned}
			\|\jbl D_x\jbr^\nu e^{\delta^\ast \eta(\jbl D_x \jbr)} \widetilde H^{-1}(t,\,x,\,D_x) U(t,\,\cdot)&\|_{L^2}^2 \\&\leq C \|e^{\delta^\ast \eta(\jbl D_x \jbr)} e^{-\rho(t) \omega(\jbl D_x \jbr)} V(t,\,\cdot)\|_{L^2}^2.
		\end{aligned}
	\end{equation*}
	Next, we use that
	\begin{equation}\label{SH:PROOF:AUXCP:WEAK:WL2}
	\begin{aligned}
	\|e^{\delta^\ast \eta(\jbl D_x \jbr)} e^{-\rho(t) \omega(\jbl D_x \jbr)} V(t,\,\cdot)\|_{L^2}^2
	\leq C \|e^{\delta^\ast \eta(\jbl D_x \jbr)} e^{-\rho(0) \omega(\jbl D_x \jbr)} V(0,\,\cdot)\|_{L^2}^2\\
	+ C \int\limits_0^t \| e^{\delta^\ast \eta(\jbl D_x \jbr)} \jbl D_x \jbr^{\nu} e^{\rho(s) \omega(\jbl D_x\jbr)} \widetilde H^{-1}(s,\,x,\,D_x) F(s,\,\cdot)\|_{L^2}^2 \rmd s,
	\end{aligned}
	\end{equation}
	which can be proved by applying $e^{\delta^\ast \eta(\jbl D_x \jbr)}$ to both sides of \eqref{SH:PROOF:AUXCP:WEAK:CP}. Indeed, we have
	\begin{equation*}
		\begin{aligned}
			\partial_t \Big(e^{\delta^\ast \eta(\jbl D_x \jbr)}V\Big)&=	 e^{\delta^\ast \eta(\jbl D_x \jbr)} \Lambda(t,\,x,\,D_x) e^{-\delta^\ast \eta(\jbl D_x \jbr)}e^{\delta^\ast \eta(\jbl D_x \jbr)}V\\ &+ e^{\delta^\ast \eta(\jbl D_x \jbr)} \I \jbl D_x \jbr^{\nu}e^{\rho(t) \omega(\jbl D_x \jbr)} \widetilde H^{-1}(t,\,x,\,D_x) F(t,\,x),
		\end{aligned}
	\end{equation*}
	where $\Lambda(t,\,x,\,D_x) = \I \widetilde A(t,\,x,\,D_x) - \I \check B(t,\,x,\,D_x) - \kappa \omega(\jbl D_x \jbr)$.
	Again, we use assumptions~\ref{SH:MR:THEOREM:WEAK:ASSUME:Eta} and \ref{SH:MR:THEOREM:WEAK:ASSUME:ConEtaK} to apply Proposition~\ref{APP:PSEUDO:CALC:CON} and Proposition~\ref{APP:PSEUDO:CALC:CON:EstR} and obtain that
	\begin{equation*}
		e^{\delta^\ast \eta(\jbl D_x \jbr)} \Lambda(t,\,x,\,D_x) e^{-\delta^\ast \eta(\jbl D_x \jbr)} \sim \Lambda(t,\,x,\,D_x) - C \delta^\ast \eta(\jbl D_x \jbr).
	\end{equation*}
	We use \eqref{SH:MR:THEOREM:WEAK:MainTheorem:Equ} to satisfy the hypothesis of the sharp form of G{\aa}rding's inequality without any restrictions on $t$, more precisely, we have
	\begin{equation*}
		\begin{aligned}
			\Re[\I \check B(t,\,x,\,\xi) + \kappa \omega(\jxi) &+ C \delta^\ast \eta(\jxi)]\\ &\geq \Re[- C_B(1+\rho(t)) \omega(\jxi) + C \delta^\ast \eta(\jxi)]\\
			&= \Re\bigg[\delta^\ast \eta(\jxi)\Big( C- \frac{C_B(1+\rho(t))}{\delta^\ast} \frac{\omega(\jxi)}{\eta(\jxi)}\Big)\bigg]\geq 0,
		\end{aligned}
	\end{equation*}
	for sufficiently large $|\xi|$.
	Application of Gronwall's Lemma then yields \eqref{SH:PROOF:AUXCP:WEAK:WL2}.
	
	Lastly, we find that
	\begin{equation*}
		\begin{aligned}
			\|e^{\delta^\ast \eta(\jbl D_x \jbr)} e^{-\rho(0) \omega(\jbl D_x \jbr)} V(0,&\,\cdot)\|_{L^2}^2 \\&\leq C\|\jbl D_x\jbr^s e^{\delta^\ast \eta(\jbl D_x \jbr)} \widetilde H^{-1}(0,\,x,\,D_x) U(0,\,\cdot)\|_{L^2}^2,
		\end{aligned}
	\end{equation*}
	which is bounded due to \ref{SH:MR:THEOREM:WEAK:ASSUME:DATA}.
	From these observations, we conclude that $U(T^\ast,\,\cdot) \in H^{\nu}_{\eta,\,\delta^\ast}$.
	
	Since $U(T^\ast,\,\cdot) \in H^{\nu}_{\eta,\,\delta^\ast}$, we are able to prove well-posedness of problem~\eqref{SH:PROOF:AUXCP:WEAK:CP} for $t \in [T^\ast,\,2 T^\ast]$ and may conclude that $U(2 T^\ast,\,\cdot) \in H^{\nu}_{\eta,\,\delta^{\ast\ast}}$, by using the same argument as above, where $\delta^{\ast\ast} < \delta^\ast$. Iteration of this procedure then yields well-posedness for all times $t \in [0,\,T]$ and we may conclude that the solution $U = U(t,\,x)$ to problem~\eqref{SH:PROOF:FAC:System} belongs to $C([0,\,T];\,H^\nu_{\eta,\,\delta})$, where $\delta < \min\{\delta_0,\,\delta_1,\,\delta_2\}$.
	Since $U(t,\,x) = (u_0(t,\,x) ,\,\ldots,\,u_{m-1}(t,\,x))^T$, with
	\begin{equation*}
		u_j(t,\,x) = \jbl D_x\jbr^{m-1-j} (D_t - \lambda_j(t,\,x,\,D_x)) \ldots (D_t - \lambda_1(t,\,x,\,D_x)) u(t,\,x),
	\end{equation*}
	we conclude that the original Cauchy problem \eqref{SH:MR:THEOREM:WEAK:MainTheorem:Equ} has a unique global (in time) solution $u=u(t,\,x)$, with
	\begin{equation*}
		u \in \bigcap\limits_{j = 0}^{m-1} C^{m-1-j}([0,\,T];\, H^{\nu+j}_{\eta,\,\delta}),
	\end{equation*}
	where $\delta < \min\{\delta_0,\,\delta_1,\,\delta_2\}$.
	The proof of Theorem~\ref{SH:MR:THEOREM:WEAK:MainTheorem} is complete.
	

	
\end{document}